\newcommand{\fy}[1]{{\color{black}#1}}
\newtheorem{assumption}{Assumption}
\newtheorem{remark}{Remark}
\newtheorem{definition}{Definition}
\newtheorem{lemma}{Lemma}
\newtheorem{proposition}{Proposition}
\newtheorem{theorem}{Theorem}
\title{\LARGE \bf
An Iterative Regularized Incremental Projected Subgradient Method for a Class of Bilevel Optimization Problems
}
\author{Mostafa Amini$^{1}$ and Farzad Yousefian$^{2}$
\thanks{$^{1}$Mostafa Amini and $^{2}$Farzad Yousefian are with the Department of Industrial Engineering and Management, Oklahoma State University, Stillwater, OK 74078, USA,
        {\tt\small moamini@okstate.edu, farzad.yousefian@okstate.edu}}%
}
\begin{document}

\maketitle
\thispagestyle{empty}
\pagestyle{empty}

\begin{abstract}

We study a class of bilevel convex optimization problems where the goal is to find the minimizer of \fy{an objective function} in the upper level, among the set of all optimal solutions of \fy{an optimization} problem in the lower level. A wide range of problems in convex optimization can be formulated \fy{using} this class. \fy{An important example is the case where an optimization problem is ill-posed.} \fy{In this paper, our interest lies in addressing the bilevel problems, where the lower level objective is given as a finite sum of separate nondifferentiable convex component functions. This is the case} in a variety of applications in distributed optimization, such as large-scale data processing in machine learning and neural networks. \fy{To the best of our knowledge, this class of bilevel problems, with a finite sum in the lower level, has not been addressed before. Motivated by this gap, we} develop an iterative regularized incremental subgradient method\fy{,} \fy{where the agents update their iterates in a cyclic manner using a regularized subgradient}. Under a suitable choice of \fy{the} regularization \fy{parameter sequence}, we establish the convergence of the proposed algorithm and \fy{derive a} rate of $\mathcal{\cal O} \left({1}/k^{0.5-\epsilon}\right)$ \fy{in terms of} the lower level objective function for an arbitrary small $\epsilon>0$. \fy{We} present the performance of the algorithm on a binary text classification problem.

\end{abstract}

\section{Introduction}

In this paper, we consider a class of bilevel optimization problems as follows
\begin{align}\label{def:SL-INC} \tag{$P_f^h$}
\displaystyle \mbox{minimize}& \qquad h(x)\\
\mbox{subject to} & \qquad x \in X^* \triangleq \arg\min_{y\in X}f(y) \notag.
\end{align}
where $f,h :\mathbb{R}^n \rightarrow \mathbb{R}$ \fy{denote the lower and upper level objective functions, respectively, and $X \subseteq \mathbb{R}^n $ is a constraint set}.
This is called the \textit{selection problem} (\cite{Tseng07,Sabach17}) as we are selecting among optimal solutions of a lower level problem, one that minimizes the objective function $h$.
\fy{In particular}, we \fy{consider the case where the lower level objective function is} given as $f(x) \triangleq \sum_{i=1}^{m} f_i(x)$, where $f_i:\mathbb{R}^n \rightarrow \mathbb{R}$ is the $i$th component function for $i=1,\cdots,m$.

We make the following basic assumptions.

\begin{assumption}[{\bf Problem properties}]\label{assum:properties}\ 

	\begin{itemize}
		\item[(a)] The set $X \subset \mathbb{R}^n$ is nonempty, compact and convex; also $X\subseteq int(dom(f) \cap dom(h))$.
		\fy{\item[(b)] The functions $f_i(x)$ for $i=1,\cdots, m$ are proper, closed, convex, and possibly nondifferentiable.
		\item[(c)] The function $h$ is strongly convex with parameter $\mu_h>0$ and possibly nondifferentiable.}
	\end{itemize}
\end{assumption}

\fy{Next}, we \fy{present} two instances of the applications of formulation \eqref{def:SL-INC}.
\subsection{Example problems}
\noindent {\bf(i) Constrained nonlinear optimization:} \fy{Consider a constrained} convex optimization problem \fy{given} as
\begin{align*}
\displaystyle \mbox{minimize}& \qquad h(x)\\
\mbox{subject to} &\qquad q_i(x) \leq 0, \ \hbox{for } i=1,\cdots,m\\
&\qquad x\in X
\end{align*}
where $X \subseteq \mathbb{R}^n$ \fy{is an easy-to-project constraint set}, $h,q_i :\mathbb{R}^n \rightarrow \mathbb{R} $ for all $i=1,\cdots,m$ \fy{are convex (and possibly nonlinear) funcitons}. This problem can be reformulated as \eqref{def:SL-INC} by setting (cf.  \cite{Solodov072}) 
\begin{align*}f(x) \triangleq \sum_{i=1}^{m} f_i(x)= \sum_{i=1}^{m} \max \{0, q_i(x)\}.\end{align*} 

\noindent {\bf (ii) Ill-posed distributed optimization:} 
An \fy{optimization} problem is called ill-posed when it has multiple optimal solutions or it is very sensitive to data perturbations \cite{Tikhonov77}. For instance, in applications arising in machine learning, consider the \textit{empirical risk minimization problem} where the goal is to minimize the total loss $\sum_{i=1}^{m} \mathfrak{L} (a_ix,b_i)$, where  $a_i$ is the input, $b_i$ is the output of $i$th observed datum and $\mathfrak{L}$ is the loss function. For example, in \textit{logisitic loss regression}, $\mathfrak{L}$ is merely convex. In these cases, another criterion such as sparsity may be taken into account for the optimal solution. So, to induce sparsity, a secondary objective function $h$ is considered in the given problem. For instance, the well-known \textit{elastic net} regularization 
can be used as function $h$. Hence, to address ill-posedness, the following bilevel optimization model is considered \cite{Tseng07,Sabach17}:
\begin{align*}
\displaystyle \mbox{minimize}& \qquad \|x\|_1 + \mu \|x\|_2^2\\
\mbox{subject to} &\qquad x \in \arg \min_{y \in X} \sum_{i=1}^{m} \mathfrak{L} (a_i^T y,b_i),
\end{align*} \\
where $\mu>0$ regulates the trade-off between $\ell_1$ and $\ell_2$ norms.

\begin{table*}[t]
	
	\centering
	\captionof{table}{\scriptsize{Comparison \fy{of Methods for Solving the} Bilevel Optimization Problem \fy{\eqref{def:SL-INC}}.}}
	\scalebox{1.2}{
\begin{tabular}{c|c|c|c|c|c}
	Paper                                                                        & \begin{tabular}[c]{@{}c@{}}Assumptions on \\ lower level obj. function\end{tabular}                                                    & \begin{tabular}[c]{@{}c@{}}Assumptions on \\ upper level obj. function\end{tabular}                         & Methodology                                                                         & Metric      & Convergence                                               \\ \hline \hline
	\multirow{2}{*}{\cite{Solodov07}}                           & \multirow{2}{*}{convex, locally Lipschitz}                                                                                         & \multirow{2}{*}{convex, smooth}                                                                         & \multirow{2}{*}{iter. regu.}                                                        & $h_k-h^*$  & \multirow{2}{*}{asymptotic}                               \\ \cline{5-5}
	&                                                                                                                                    &                                                                                                         &                                                                                     & $f_k-f^*$ &                                                           \\ \hline
	\multirow{2}{*}{\cite{Solodov072}}                           & \multirow{2}{*}{convex, nonsmooth}                                                                                                            & \multirow{2}{*}{convex, nonsmooth}                                                                                 & \multirow{2}{*}{iter. regu.}                                                        & $h_k-h^*$  & \multirow{2}{*}{asymptotic}                               \\ \cline{5-5}
	&                                                                                                                                    &                                                                                                         &                                                                                     & $f_k-f^*$ &                                                           \\ \hline
	\multirow{2}{*}{\cite{Beck14}}                              & \multirow{2}{*}{convex, Lipschitz}                                                                                                 & \multirow{2}{*}{\begin{tabular}[c]{@{}c@{}}strongly convex, continuously\\ differentiable\end{tabular}} & \multirow{2}{*}{MNG}                                                                & $h_k-h^*$  & asymptotic                                                \\ \cline{5-6} 
	&                                                                                                                                    &                                                                                                         &                                                                                     & $f_k-f^*$ & $\mathcal{\cal O} \left({1}/{\sqrt{k}}\right)$            \\ \hline
	\multirow{2}{*}{\cite{Sabach17}}                            & \multirow{2}{*}{\begin{tabular}[c]{@{}c@{}}convex, continuously\\ differentiable, Lipschitz\end{tabular}} & \multirow{2}{*}{strongly convex, smooth}                                                                & \multirow{2}{*}{SAM}                                                                & $h_k-h^*$  & asymptotic                                                \\ \cline{5-6} 
	&                                                                                                                                    &                                                                                                         &                                                                                     & $f_k-f^*$ & $\mathcal{\cal O} \left({1}/{k}\right)$                   \\ \hline
	\multirow{2}{*}{\cite{Yousefian17}}                             & \multirow{2}{*}{convex, differentiable}                                                                                            & \multirow{2}{*}{strongly convex, differentiable}                                                        & \multirow{2}{*}{iter. regu.}                                                        & $h_k-h^*$  & $ \mathcal{\cal O} \left({1}/k^{1/6-\epsilon}\right) $  \\ \cline{5-6} 
	&                                                                                                                                    &                                                                                                         &                                                                                     & $f_k-f^*$ & asymptotic                                                \\ \hline
	\multirow{2}{*}{\cite{Rosasco18}}                                  & \multirow{2}{*}{\begin{tabular}[c]{@{}c@{}}convex, continuously\\ differentiable\end{tabular}}                                     & \multirow{2}{*}{\begin{tabular}[c]{@{}c@{}}strongly convex, continuously\\ differentiable\end{tabular}} & \multirow{2}{*}{iter. regu.}                                                        & $h_k-h^*$  &$ \mathcal{\cal O} \left({1}/k\right) $  \\ \cline{5-6} 
	&                                                                                                                                    &                                                                                                         &                                                                                     & $f_k-f^*$ & asymptotic                  \\ \hline
	\multirow{2}{*}{{\bf \scriptsize This work}} & \multirow{2}{*}{\begin{tabular}[c]{@{}c@{}} convex, \fy{nondifferentiable} \\ ({\bf finite sum form})\end{tabular}}                                                      & \multirow{2}{*}{strongly convex, \fy{nondifferentiable}}                                                                        & \multirow{2}{*}{\begin{tabular}[c]{@{}c@{}}incremental \\ iter. regu.\end{tabular}} & $h_k-h^*$  & asymptotic                                                \\ \cline{5-6} 
	&                                                                                                                                    &                                                                                                         &                                                                                     & $f_k-f^*$ & $ \mathcal{\cal O} \left({1}/k^{0.5-\epsilon}\right)  $ \\ \hline \hline
\end{tabular}}

	\label{fig:fiveplots}
\end{table*}

\subsection{Existing methods}
Problem \eqref{def:SL-INC}\fy{,} that is also referred to as \textit{hierarchical optimization}\fy{,} is a particular case of \textit{mathematical program with generalized equation (or equilibrium) constraint} \cite{Kocvara04,Pang96}. There has been a few approaches to tackle this problem. Note that in all approaches the following minimization problem and its minimizer have been extensively utilized.
\begin{definition}\label{def:reg minimizer}
\fy{Given a parameter $\lambda > 0$}, the regularized problem corresponding to \eqref{def:SL-INC}, \fy{is defined} as
	\begin{align}\label{reg form INC}
	\displaystyle \mbox{minimize}& \qquad f_\lambda (x) \triangleq f(x)+ \lambda h(x)\\
	\mbox{subject to} &\qquad x \in X\notag.
	\end{align} 
	Also, let $x_\lambda^*$ denote the unique minimizer of this problem.
	We may categorize the existing algorithms as follows.
	
\noindent {\bf(i) Exact regularization:} The regularization technique has been highly used in some applications such as signal processing with $h(x)= \|x\|_2^2$ or $h(x)= \|x\|_1$ \cite{Tikhonov77,Beck09}. This technique needs a proper parameter $\lambda$ which is difficult to determine in most of cases. To address this issue,  Mangasarian et al. \cite{Mangasarian79,Mangasarian85} introduced \textit{exact regularization}. A solution of  problem \eqref{reg form INC}  is called \textit{exact} when it is in the set $X^*$. The main drawback of this approach is that the threshold below which for any $\lambda$ the regularization \eqref{reg form INC} is exact, is very difficult to determine a priori \fy{(see \cite{Tseng07})}.

\noindent {\bf(ii) Iterative regularization:} In this approach\fy{,} the idea is to develop a single-loop scheme where the regularization parameter is updated iteratively during the algorithm. In \cite{Solodov07},  an \textit{explicit} descent algorithm is proposed, where problem \eqref{reg form INC} is solved as \fy{a single}-level unconstrained problem iteratively. \fy{In the smooth case,} the convergence is shown  when $\sum_{k=1}^{\infty} \lambda_k =\infty$ and $\lim_{k\rightarrow \infty} \lambda_k=0$.
For nonsmooth cases, a bundle method was proposed, which has a descent step for the weighted combination of objective functions in the lower and upper levels \cite{Solodov072}. Another algorithm called \textit{hybrid steepest descent method} (HSDM) and its extensions were developed in \cite{Yamada11,Neto11}. The main drawback of all these works is that no complexity analysis was provided for the \fy{underlying algrotihm}.  In \cite{Yousefian17}, the rate of $\mathcal{\cal O} (1/k^{1/6-\epsilon})$ is derived for a special case of problem \eqref{def:SL-INC}\fy{,} where the objective function is $\|\cdot\|_2$ and the constraint is solutions of a stochastic variational inequality problem. Recently, a primal-dual algorithm in \cite{Rosasco18} was offered that uses the idea of iterative regularization. Despite \fy{a sublinear rate in terms of $h$}, the algorithm can only be applied to continuously differentiable \fy{and small-scale (i.e., $m=1$)} regimes.

\noindent {\bf (iii) Minimal norm gradient:} In \cite{Beck14}, the \textit{minimal norm gradient} (MNG) method was developed for solving \fy{problem \eqref{def:SL-INC} with $m=1$}. The rate of $\mathcal{\cal O} (1/\sqrt{k})$ was derived for the convergence with respect to lower level problem. 
The main disadvantage of the MNG method is that it is a two-loop scheme where at each iteration a minimization problem should be solved. 

\noindent {\bf (iv) Sequential averaging:} The \textit{sequential averaging method} (SAM), developed in \cite{Xu04}, was \fy{employed} in \cite{Sabach17} for solving the problem in a more general setting. The proposed method is proved to have the rate of convergence of $\mathcal{\cal O} (1/k)$ in terms of the function $f$. The method is called \textit{Big-SAM}. \fy{Despite that it is a single-loop scheme, sequential averaging schemes require smoothness properties of the problem and seem not to lend themselves to distributed implementations.}
\subsection{Main Contributions}
\fy{For more details on the main distinctions between our work and the existing methods see Table \ref{fig:fiveplots}. In none of the existing methods,} the finite sum form for the lower level problem is \fy{considered}. The sum structure is very rampant in practice when we have separate objective functions related to different agents in a distributed setting. This is the case for example in machine learning for very large datasets \cite{Bottou05}, where each $f_i$ represents an agent that is cooperating with others. When the complete information of all the agents, i.e. summation of all (sub)gradients is not available, these agents can be treated distinctly. Due to its wide range of applications in distributed optimization, finite sum problem has been extensively studied. Among popular methods are \textit{incremental (sub)gradient} (IG) \cite{Bertsekas99,Nedich01} and \textit{incremental aggregated (sub)gradients} (IAG) \cite{Blatt07,Tseng14} for deterministic and \textit{stochastic average gradient} (SAG) \cite{Roux12}, SAGA \cite{Defazio14} and MISO methods \cite{Mairal13} for stochastic regimes. These algorithms have faster convergence and are computationally efficient in large-scale optimization since a very less amount of memory is required at each step in order to store only one agent's information and subsequently update the iterate based on that \cite{Ozdaglar17}. Despite the widespread use of these first-order methods, they do not address \fy{the} bilevel problem \eqref{def:SL-INC}. 


Motivated by the existing lack in the literature and inspired by the advantages of incremental approaches, in this paper, we let the lower level objective function to be a summation of $m$ components. Then we use the idea of incremental subgradient optimization to address problem \eqref{def:SL-INC}. We let functions in both levels to be nondifferentiable. We then prove the convergence of our proposed algorithm as well as the $\mathcal{\cal O} (1/k^{0.5-\epsilon})$ rate of convergence.

\begin{remark}
	An interesting research question that is remained as a future direction to our research is if we can establish the convergence of iterative regularized IAG method in solving problem \eqref{def:SL-INC} or similarly SAG, SAGA and MISO in stochastic regimes.
\end{remark}

{\bf Notation}
The inner product of two vectors $x,y \in \mathbb{R}^n$, is shown as $x^Ty$. Also, $\|\cdot\|$ denotes Euclidean norm known as $\|\cdot\|_2$. For a convex function $f$ with the domain dom$(f)$, any vector $g_f$ with $f(x)+g_f^T(y-x) \leq f(y)$ for all $x,y \in \hbox{dom}(f)$, is called a subgradient of $f$ at $x$. We let $\partial f(x)$ and $\partial  h(x)$ denote the set of all subgradients of functions $f$ and $h$ at $x$. Let $f^*$ be the optimal value and $X^*$ represent the set of all optimal solutions of \fy{the} lower level problem in \eqref{def:SL-INC} and $x^*$ shows any element of this set . Likewise, $x^*_h$ \fy{denotes} the optimal solution of problem \eqref{def:SL-INC}\fy{, and} $x^*_{\lambda}$ denotes the optimal solution of problem \eqref{reg form INC}. \fy{Also, we let $\mathcal{P}_X(x)$ denote the Euclidean projection of vector $x$ onto the set $X$.}

The rest of this paper is organized as follows. In Section \ref{sec: alg out}, we present the algorithm outline. Then, we discuss the convergence analysis in Section \ref{sec: conv ana}, and derive the convergence rate in Section \ref{sec:rate result}. We present the numerical results in Section \ref{sec:num}, and conclude in Section \ref{sec:rem}.
\section{Algorithm outline} \label{sec: alg out}
In this section, we introduce the \textit{iterative regularized incremental projected (sub)gradient} (IR-IG) for generating a sequence that converges to the unique optimal solution of \eqref{def:SL-INC}. See Algorithm \ref{algorithm:IRIG}.   
\begin{algorithm}
	\caption{IR-IG}
	\label{algorithm:IRIG}
	\begin{algorithmic}
		\STATE{\textbf{initialization:} Set an arbitrary initial point $x_0\in X$, $\bar x_0=x_0$, and $S_0=\gamma_0^r$ and pick $r<1$.}
		\FOR{$k=0,1,\ldots,N-1$}
		\STATE{Set $x_{k,0}=x_k$ and pick $\gamma_k>0$ and $\lambda_k>0$.}
		\FOR{$i=0,1,\ldots,m-1$}
		\STATE{Pick $g_{f_{i+1}}(x_{k,i})  \in \partial f_{i+1} (x_{k,i})$ and $g_h(x_{k,i}) \in \partial h(x_{k,i})$}.
		\STATE{Update $x_{k,i}$ using the following relation:}
		\begin{align}\label{mainstep}
		x_{k,i+1} := \mathcal{P}_X \left(x_{k,i} - \gamma_k\left(g_{f_{i+1}}(x_{k,i}) + \frac{\lambda_k}{m} g_h(x_{k,i})\right) \right).
		\end{align}
		\ENDFOR
		\STATE{Set $x_{k+1}:=x_{k,m}$.}
		\STATE{Update $S_k$ and $\bar x_{k}$ using:}
		\begin{align}
		S_{k+1}:=S_k+\gamma_{k+1}^r, \quad  
		\bar x_{k+1}:=\frac{S_k \bar x_k+\gamma_{k+1}^r x_{k+1}}{S_{k+1}}.\label{def:averagingII} 
		\end{align}
		\ENDFOR
		\RETURN $\bar x_{N}.$
	\end{algorithmic}
\end{algorithm}
IR-IG method includes two main steps. First, the agents update their iterates in an incremental fashion similar to the standard IG method. This step takes a circle around the all components of function $f$ to update the iterate. However, The main difference lies in the secondary objective function $h$, which is added by a vanishing multiplier $\lambda_k$. Second, we do averaging in order to accelerate the convergence speed of the algorithm. For this, we consider a weighted average sequence $\{\bar x_k\}$ defined as below: 
	\begin{align}\label{weighted alg INC}
	\bar x_{k+1} := \sum_{t=0}^{k} \psi_{t,k} x_t,  \hbox{ where } \psi_{t,k} \triangleq \frac{\gamma_t^r}{\sum_{i=0}^{k} \gamma_i^r},	
	\end{align}
	in which $r<1$ is a constant, controlling the weights. Note that \eqref{def:averagingII} in Algorithm \ref{algorithm:IRIG} follows from the relation \eqref{weighted alg INC} by applying induction,  (see e.g., Proposition 3 in \cite{Yousefian18}).
	
\end{definition}

\section{Convergence analysis}\label{sec: conv ana}
In this section, \fy{our goal is to show that} the generated sequence $\{\bar x_k\}$ by Algorithm \ref{algorithm:IRIG} converges to the unique optimal solution of problem \eqref{def:SL-INC} \fy{(see Theorem \ref{thm conv for xbar INC})}.
\begin{remark} \label{bound on gradient by beck}
\noindent (a) Note that from Theorem 3.16, pg. 42 of \cite{Beck17}, Assumption \ref{assum:properties} implies that there exist constants $C_f, C_h \in \mathbb{R}$ such that $\|g_{f_i}(x)\| \leq C_f$ and $\|g_{h}(x)\| \leq C_h$ for all $i=1,\cdots, m$ and $x\in X$, where $g_{f_i}(x) \in \partial f_i(x)$ and $g_{h}(x) \in \partial h(x)$.\\
\noindent (b) From Theorem 3.61, pg. 71 of \cite{Beck17}, functions $f_i$ and $h$ are Lipschitz over $X$ with parameters $C_f$ and $C_h$, respectively, i.e., for all $i=1,\cdots, m$ and $x,y\in X$
		\begin{align*}
		|f_i(x)-f_i(y)| \leq C_f \|x-y\|, \quad   |h(x)-h(y)| \leq C_h \|x-y\|.
		\end{align*}
\fy{\noindent	(c) Assumption \ref{assum:properties}(a,b) imply that the optimal solution set, $X^*$, is nonempty.}
\end{remark}

\vspace*{0.3 cm}
Here, we start with a lemma which helps bound the error of optimal solutions of the problem \eqref{reg form INC} for two different values of $\lambda$. We will make use of this lemma in the convergence analysis. The proof for this lemma can be done in a same fashion to that of Proposition 1 in \cite{Yousefian17}.
\begin{lemma} \label{lemma:err_inequalitygeneral}
	Let Assumption \ref{assum:properties} hold. Suppose $\{x_{\lambda_k}^* \}$ be the sequence of the optimal solutions of problem \eqref{reg form INC} with parameter $\lambda:= \lambda_k$. Then,
	\begin{itemize}
		
		\item[(a)] $\|x_{\lambda_k}^*-x_{\lambda_{k-1}}^*\|\leq \frac{C_h}{\mu_h}\left	|1-\frac{\lambda_{k-1}}{\lambda_k} \right|$.
		\item[(b)] If $\lambda_k \rightarrow 0$, then the sequence $\{x_{\lambda_k}^*\}$ converges to the unique optimal solution of problem \eqref{def:SL-INC}, i.e., $x_{h}^*$.
	\end{itemize}
\end{lemma}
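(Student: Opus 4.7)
My plan for part (a) is to combine the variational inequality characterizations of the two minimizers with monotonicity arguments. Since $x_{\lambda_k}^*$ minimizes $f(x)+\lambda_k h(x)$ over $X$, the KKT conditions guarantee subgradients $g_f^k \in \partial f(x_{\lambda_k}^*)$, $g_h^k \in \partial h(x_{\lambda_k}^*)$ and a normal vector $v^k \in N_X(x_{\lambda_k}^*)$ with $g_f^k + \lambda_k g_h^k + v^k = 0$, and similarly for index $k-1$. Subtracting the two KKT identities and taking the inner product with $x_{\lambda_k}^* - x_{\lambda_{k-1}}^*$, I would split $\lambda_k g_h^k - \lambda_{k-1} g_h^{k-1}$ as $\lambda_k(g_h^k - g_h^{k-1}) + (\lambda_k - \lambda_{k-1})g_h^{k-1}$. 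The monotonicity of $\partial f$ and of $N_X$ eliminates two of the resulting terms with nonnegative signs, leaving an inequality of the form $\lambda_k (g_h^k - g_h^{k-1})^T(x_{\lambda_k}^* - x_{\lambda_{k-1}}^*) \leq -(\lambda_k - \lambda_{k-1})(g_h^{k-1})^T(x_{\lambda_k}^* - x_{\lambda_{k-1}}^*)$.

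Next, I would invoke the strong monotonicity of $\partial h$ (which follows from strong convexity of $h$ with modulus $\mu_h$) to lower-bound the left-hand side by $\lambda_k \mu_h \|x_{\lambda_k}^* - x_{\lambda_{k-1}}^*\|^2$, and use Cauchy--Schwarz together with the subgradient bound $\|g_h^{k-1}\| \leq C_h$ from Remark~\ref{bound on gradient by beck}(a) to upper-bound the right-hand side by $|\lambda_k - \lambda_{k-1}|\,C_h\,\|x_{\lambda_k}^* - x_{\lambda_{k-1}}^*\|$. Dividing by $\lambda_k \mu_h \|x_{\lambda_k}^* - x_{\lambda_{k-1}}^*\|$ (the case of equality being trivial) yields the claim. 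An alternative route, essentially equivalent and aligned with the reference to Proposition~1 of \cite{Yousefian17}, is to write the two strong-convexity inequalities for $f_{\lambda_k}$ and $f_{\lambda_{k-1}}$ evaluated at each other's minimizers, add them, and use Lipschitz continuity of $h$ from Remark~\ref{bound on gradient by beck}(b).

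For part (b), I would first show that $f(x_{\lambda_k}^*) \to f^*$. Taking any $x^* \in X^*$, optimality of $x_{\lambda_k}^*$ for $f_{\lambda_k}$ gives $f(x_{\lambda_k}^*) - f^* \leq \lambda_k(h(x^*) - h(x_{\lambda_k}^*))$, and Lipschitz continuity of $h$ together with boundedness of $X$ makes the right-hand side vanish as $\lambda_k \to 0$. Then, dividing the same inequality by $\lambda_k$ and dropping the nonnegative term $(f(x_{\lambda_k}^*)-f^*)/\lambda_k$, I obtain $h(x_{\lambda_k}^*) \leq h(x^*)$ for every $x^* \in X^*$, in particular for $x^*=x_h^*$.

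Finally, compactness of $X$ guarantees that $\{x_{\lambda_k}^*\}$ has a convergent subsequence with limit $\bar{x} \in X$. Continuity of $f$ on $X$ together with $f(x_{\lambda_k}^*) \to f^*$ shows $\bar{x} \in X^*$, and continuity of $h$ combined with the inequality $h(x_{\lambda_k}^*) \leq h(x_h^*)$ yields $h(\bar{x}) \leq h(x_h^*)$. Since $x_h^*$ is the unique minimizer of the strongly convex $h$ over $X^*$, this forces $\bar{x}=x_h^*$. As every convergent subsequence has the same limit, the full sequence converges to $x_h^*$. The main obstacle I anticipate is keeping the argument in part (a) clean given the nondifferentiability of both $f$ and $h$, since it requires careful bookkeeping of subgradient selections consistent with the KKT conditions rather than relying on gradient identities.
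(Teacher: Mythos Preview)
Your argument is correct in both parts. Note, however, that the paper does not actually supply its own proof of this lemma: it simply states that ``the proof for this lemma can be done in a same fashion to that of Proposition~1 in \cite{Yousefian17}.'' So there is no detailed proof in the paper to compare against line by line.

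That said, your variational-inequality/monotonicity argument for part~(a) is the natural way to handle the nondifferentiable setting, and it lands exactly on the desired bound. The ``alternative route'' you mention---writing the two strong-convexity inequalities for $f_{\lambda_k}$ and $f_{\lambda_{k-1}}$ at each other's minimizers, adding them, and invoking Lipschitz continuity of $h$---is precisely the style of argument used in the cited Proposition~1 of \cite{Yousefian17} (there for a regularized variational inequality), so your two approaches are essentially the same computation viewed from the primal versus the optimality-condition side. Your part~(b) argument via compactness, lower-level optimality, the uniform bound $h(x_{\lambda_k}^*)\le h(x_h^*)$, and uniqueness of the strongly convex minimizer over $X^*$ is also standard and matches what the referenced result establishes. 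The concern you raise about bookkeeping subgradient selections in the KKT system is not a genuine obstacle: existence of such selections is guaranteed by the sum rule for subdifferentials under Assumption~\ref{assum:properties}(a), and the monotonicity inequalities you use hold for any such selections.
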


To get started, we also need a recursive upper bound on the term $ \|x_{k+1} - x_{\lambda_k}^*  \|$. This is provided by the following lemma and will be used in Proposition \ref{conv x_k INC} to prove the convergence of sequence $\{x_k\}$ generated by the algorithm to $x_h^*$.

\begin{lemma} [{\bf A recursive error bound}] \label{lemma: an error bound INC}
	Let Assumption \ref{assum:properties} hold and $0<\mu_k\lambda_k \mu_h \leq 2m$. Then, for the sequence $\{x_k\}$ generated by Algorithm \ref{algorithm:IRIG} and for all $k>0$ we have
	\begin{align*}
	\left \|x_{k+1} - x_{\lambda_k}^* \right \|^2 & \leq \left( 1- \frac{\gamma_k \lambda_k \mu_h}{2m} \right) \left \|x_{k} - x_{\lambda_{k-1}}^* \right \|^2 \\ & + \frac{3m C_h^2}{\gamma_k \lambda_k \mu_h^3}\left	|1-\frac{\lambda_{k-1}}{\lambda_k} \right|^2 +  6 m^2 \gamma_k^2 (C_f^2 + \lambda_k^2 C_h^2),
	\end{align*}
	where $x_{\lambda_k}^*$ is the unique optimal solution of problem \eqref{reg form INC} with $\lambda:=\lambda_k$.
\end{lemma}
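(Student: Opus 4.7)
The plan is to regard one outer iteration of IR-IG as $m$ consecutive projected subgradient steps applied to the regularized composite function $f_{\lambda_k}(x)=\sum_{i=1}^{m}\bigl(f_i(x)+\tfrac{\lambda_k}{m}h(x)\bigr)$, whose unique minimizer is $x_{\lambda_k}^*$, and then to swap the reference point from $x_{\lambda_k}^*$ to $x_{\lambda_{k-1}}^*$ via Lemma~\ref{lemma:err_inequalitygeneral}(a). First, by non-expansiveness of $\mathcal{P}_X$ applied at the $i$-th inner step, and setting $v_{k,i}:=g_{f_{i+1}}(x_{k,i})+\tfrac{\lambda_k}{m}g_h(x_{k,i})$, I would obtain
\[
\|x_{k,i+1}-x_{\lambda_k}^*\|^2 \le \|x_{k,i}-x_{\lambda_k}^*\|^2-2\gamma_k\langle v_{k,i},x_{k,i}-x_{\lambda_k}^*\rangle+\gamma_k^2\|v_{k,i}\|^2,
\]
and sum over $i=0,\ldots,m-1$, so that the left-hand side telescopes to $\|x_{k+1}-x_{\lambda_k}^*\|^2$. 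Using Remark~\ref{bound on gradient by beck}(a), each $\|v_{k,i}\|^2\le 2(C_f^2+\lambda_k^2C_h^2)$, so the $\sum_i\gamma_k^2\|v_{k,i}\|^2$ term already sits within the third contribution of the claimed bound.

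Second, I would lower-bound each inner product through convexity of $f_{i+1}$ and $\mu_h$-strong convexity of $h$, producing the term-by-term gap $f_{i+1}(x_{k,i})-f_{i+1}(x_{\lambda_k}^*)+\tfrac{\lambda_k}{m}\bigl(h(x_{k,i})-h(x_{\lambda_k}^*)\bigr)+\tfrac{\lambda_k\mu_h}{2m}\|x_{k,i}-x_{\lambda_k}^*\|^2$. The main obstacle is that the subgradients are evaluated at the intermediate iterates $x_{k,i}$ rather than at $x_k$, so the sum of the function-value differences does not directly equal $f_{\lambda_k}(x_k)-f_{\lambda_k}(x_{\lambda_k}^*)$. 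To handle this ``incremental lag,'' I would unroll the inner update and invoke Remark~\ref{bound on gradient by beck}(a) to establish $\|x_{k,i}-x_k\|\le i\gamma_k(C_f+\lambda_kC_h/m)$, and then apply the Lipschitz estimates in Remark~\ref{bound on gradient by beck}(b) to replace $f_{i+1}(x_{k,i})$ and $h(x_{k,i})$ by $f_{i+1}(x_k)$ and $h(x_k)$ at the price of an error of order $m^2\gamma_k^2(C_f^2+\lambda_k^2C_h^2)$. After this substitution, $f_{\lambda_k}(x_k)-f_{\lambda_k}(x_{\lambda_k}^*)\ge 0$ can be dropped. Retaining only $i=0$ in $\sum_i\|x_{k,i}-x_{\lambda_k}^*\|^2\ge\|x_k-x_{\lambda_k}^*\|^2$, I would end up with a preliminary recursion of the form
\[
\|x_{k+1}-x_{\lambda_k}^*\|^2\le\Bigl(1-\tfrac{\gamma_k\lambda_k\mu_h}{m}\Bigr)\|x_k-x_{\lambda_k}^*\|^2+c\,m^2\gamma_k^2\bigl(C_f^2+\lambda_k^2C_h^2\bigr),
\]
for some absolute constant $c$.

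The last step is to shift the reference point from $x_{\lambda_k}^*$ to $x_{\lambda_{k-1}}^*$. I would apply the elementary inequality $(a+b)^2\le(1+\eta)a^2+(1+\eta^{-1})b^2$ with $a=\|x_k-x_{\lambda_{k-1}}^*\|$ and $b=\|x_{\lambda_k}^*-x_{\lambda_{k-1}}^*\|$, tuning $\eta$ of order $\gamma_k\lambda_k\mu_h/m$ so that $(1-\tfrac{\gamma_k\lambda_k\mu_h}{m})(1+\eta)$ collapses to exactly $1-\tfrac{\gamma_k\lambda_k\mu_h}{2m}$; the standing hypothesis $\gamma_k\lambda_k\mu_h\le 2m$ keeps this factor in the interval $[0,1]$. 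The companion coefficient $(1+\eta^{-1})$ is of order $m/(\gamma_k\lambda_k\mu_h)$, and Lemma~\ref{lemma:err_inequalitygeneral}(a) then converts $\|x_{\lambda_k}^*-x_{\lambda_{k-1}}^*\|^2$ into $\tfrac{C_h^2}{\mu_h^2}\bigl|1-\lambda_{k-1}/\lambda_k\bigr|^2$, yielding the $\tfrac{3mC_h^2}{\gamma_k\lambda_k\mu_h^3}\bigl|1-\lambda_{k-1}/\lambda_k\bigr|^2$ term once constants are consolidated. Collecting this with the earlier noise term under an aggregated constant $6$ delivers the stated inequality.
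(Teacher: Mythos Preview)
Your proposal is correct and follows essentially the same approach as the paper: telescoping the projected-subgradient inequality over the $m$ inner steps, using strong convexity of $h$ to extract a contraction factor at $i=0$, absorbing the incremental lag $\|x_{k,i}-x_k\|$ via the Lipschitz bounds of Remark~\ref{bound on gradient by beck}, and finally shifting the reference point with the same Young-type inequality and Lemma~\ref{lemma:err_inequalitygeneral}(a). The only cosmetic differences are that the paper applies the strong-convexity bound \emph{before} summing (getting a $(1-\gamma_k\lambda_k\mu_h/m)$ coefficient on each $\|x_{k,i}-x_{\lambda_k}^*\|^2$, then relaxing it to $1$ for $i\ge 1$ and cancelling), whereas you telescope first and then drop the $i\ge 1$ quadratic terms; and that the paper writes the reference-shift step via $2a^Tb\le\|a\|^2/\alpha+\alpha\|b\|^2$ with $\alpha=2m/(\gamma_k\lambda_k\mu_h)$, which is the expanded form of your $(1+\eta)/(1+\eta^{-1})$ bound with $\eta=\gamma_k\lambda_k\mu_h/(2m)$. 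One minor wording issue: $(1-\tfrac{\gamma_k\lambda_k\mu_h}{m})(1+\eta)$ does not collapse \emph{exactly} to $1-\tfrac{\gamma_k\lambda_k\mu_h}{2m}$ but is bounded above by it (the cross term $-\eta\cdot\tfrac{\gamma_k\lambda_k\mu_h}{m}$ is nonpositive), and the hypothesis $\gamma_k\lambda_k\mu_h\le 2m$ is what makes the companion coefficient $(1-\tfrac{\gamma_k\lambda_k\mu_h}{m})(1+\eta^{-1})$ bounded by $3m/(\gamma_k\lambda_k\mu_h)$.
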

\begin{proof}
	Using \eqref{mainstep} and the nonexpansiveness property of projection, we have\\
	$
	\left \|x_{k,i+1} - x_{\lambda_k}^* \right \|^2  \\= \left \|\mathcal{P}_X \left(x_{k,i} - \gamma_k\left(g_{f_{i+1}}(x_{k,i}) +  \frac{\lambda_k}{m} g_h(x_{k,i})\right) \right) - \mathcal{P}_X(x_{\lambda_k}^*) \right \|^2 \\
	 \leq \left \|x_{k,i} - x_{\lambda_k}^* \right \|^2 + \gamma_k^2 \left \|g_{f_{i+1}}(x_{k,i}) + \frac{\lambda_k}{m} g_h(x_{k,i}) \right \|^2 \\-2\gamma_k \left ( g_{f_{i+1}}(x_{k,i}) +  \frac{\lambda_k}{m} g_h(x_{k,i})\right )^T\left( x_{k,i} - x_{\lambda_k}^*\right ).\\
	$
	\vspace{0.05cm}
	
	By boundedness of subgradients from Remark \ref{bound on gradient by beck}(a), the definition of subgradient for $f_{i+1}$, and the strong convexity of $h$, we obtain
	\begin{align*}
	& \left \|x_{k,i+1} - x_{\lambda_k}^* \right \|^2\\ & \leq \left \|x_{k,i} - x_{\lambda_k}^* \right \|^2 +2 \gamma_k^2 (C_f^2 + \lambda_k^2 C_h^2)  - 2\gamma_k \left( f_{i+1}(x_{k,i})- f_{i+1}(x_{\lambda_k}^*) \right) \\
	& -  \frac{2\gamma_k \lambda_k}{m} \left( h(x_{k,i}) - h(x_{\lambda_k}^*) \right) - \frac{\gamma_k \lambda_k \mu_h}{m} \left \|x_{k,i} - x_{\lambda_k}^* \right \|^2 \\
	& = \left( 1- \frac{\gamma_k \lambda_k \mu_h}{m} \right) \left \| x_{k,i} - x_{\lambda_k}^* \right \|^2 - 2\gamma_k \left( f_{i+1}(x_{k,i}) + \frac{\lambda_k}{m} h(x_{k,i}) \right) \\& + 2\gamma_k \left(f_{i+1}(x_{\lambda_k}^*)+ \frac{\lambda_k}{m} h(x_{\lambda_k}^*) \right) +2 \gamma_k^2 (C_f^2 + \lambda_k^2 C_h^2).
	\end{align*}
	Taking summation from both sides over $i$, using $x_{k,0}=x_k,  x_{k,m}=x_{k+1}$, and that $\gamma_k \lambda_k \mu_h >0$, we obtain
	\fy{\begin{align}
	&\sum_{i=0}^{m-1} \left \|x_{k,i+1} - x_{\lambda_k}^* \right \|^2 \nonumber \\
	&\leq \left( 1- \frac{\gamma_k \lambda_k \mu_h}{m} \right) \left \|x_k - x_{\lambda_k}^* \right \|^2 + \sum_{i=1}^{m-1} \left \| x_{k,i} - x_{\lambda_k}^* \right \|^2 \nonumber \\& + 2 m \gamma_k^2 (C_f^2 + \lambda_k^2 C_h^2) 
	- 2\gamma_k \sum_{i=0}^{m-1} \left( f_{i+1}(x_{k,i}) + \frac{\lambda_k}{m} h(x_{k,i}) \right) \nonumber\\&+ 2\gamma_k \left(f(x_{\lambda_k}^*)+ \lambda_k h(x_{\lambda_k}^*) \right), \label{ineq1 INC}
	\end{align}}
	where we used the definition of function $f$ in the second inequality. Now by rearranging the terms and adding and subtracting $f(x_k)+\lambda_k h(x_k)$ we obtain
	\fy{\begin{align*}
	&\left \|x_{k+1} - x_{\lambda_k}^* \right \|^2 \\ &\leq \left( 1- \frac{\gamma_k \lambda_k \mu_h}{m} \right) \left \|x_k - x_{\lambda_k}^* \right \|^2 + 2 m \gamma_k^2 (C_f^2 + \lambda_k^2 C_h^2)\\
	& - 2\gamma_k \sum_{i=0}^{m-1} \left((f_{i+1}(x_{k,i}) - f_{i+1}(x_{k}) ) + \frac{\lambda_k}{m} (h(x_{k,i})  -h(x_{k}))  \right) \\& + 2\gamma_k \underbrace{\left(f(x_{\lambda_k}^*)+ \lambda_k h(x_{\lambda_k}^*) -f(x_k)-\lambda_k h(x_k) \right)}_{Term1}\\
	&\leq \left( 1- \frac{\gamma_k \lambda_k \mu_h}{m} \right) \left \|x_k - x_{\lambda_k}^* \right \|^2 + 2 m \gamma_k^2 (C_f^2 + \lambda_k^2 C_h^2)\\
	& + 2\gamma_k \sum_{i=0}^{m-1} \left( \underbrace{|f_{i+1}(x_{k,i}) - f_{i+1}(x_{k}) |}_{Term2} + \frac{\lambda_k}{m} \underbrace{|h(x_{k,i})  -h(x_{k})|}_{Term3}  \right),
	\end{align*}}
	where $Term1\leq 0$ is used due to optimality of $x_{\lambda_k}^*$ for $f+\lambda_k h$. Also, from Remark \ref{bound on gradient by beck}(b) we know that $Term2 \leq C_f\|x_{k,i} - x_k\|$ and $Term3 \leq C_h\|x_{k,i} - x_k\|$. So, We have
	\begin{align}\label{incrementalineq1}
	&\left \|x_{k+1} - x_{\lambda_k}^* \right \|^2  \leq \left( 1- \frac{\gamma_k \lambda_k \mu_h}{m} \right) \left \|x_{k} - x_{\lambda_k}^* \right \|^2  \nonumber \\ &+ 2 m \gamma_k^2 (C_f^2 + \lambda_k^2 C_h^2)
	+ 2 (C_f+\lambda_k C_h) \gamma_k \sum_{i=0}^{m-1} \| x_{k,i} - x_k\|.
	\end{align}
	Next, we find an upper bound for $\| x_{k,i} - x_k\|$. We have\\
	$
	\| x_{k,1} - x_k\|\\ =\left \|\mathcal{P}_X \left(x_{k,0} - \gamma_k\left(g_{f_1}(x_{k,0}) + \frac{\lambda_k}{m} g_h(x_{k,0})\right) \right)- \mathcal{P}_X(x_k) \right \|\\
	\leq \gamma_k \left \|g_{f_1}(x_{k,0}) + \frac{\lambda_k}{m} g_h(x_{k,0})\right \| \leq \gamma_k \left( C_f + \frac{\lambda_k}{m} C_h \right).\\
	$
	
		\vspace{0.05cm}
	For $i>0$, in a similar way, we have
	\begin{align*}
	\| x_{k,i+1} - x_k\| 
	\leq \| x_{k,i} - x_k\|+ \gamma_k \left( C_f + \frac{\lambda_k}{m} C_h \right).
	\end{align*}
	So for $i=0,1, \cdots, m-1$, we have
	\begin{align} \label{ineq2 INC}
	\| x_{k,i+1} - x_k\| &\leq (i+1)\gamma_k \left( C_f + \frac{\lambda_k}{m} C_h \right) \nonumber \\&\leq  (i+1)\gamma_k \left( C_f + \lambda_k C_h \right).
	\end{align}
	Combining this with \eqref{incrementalineq1}, we will obtain
	\begin{align}\label{incrementalineq2}
	\left \|x_{k+1} - x_{\lambda_k}^* \right \|^2  
	&\leq \left( 1- \frac{\gamma_k \lambda_k \mu_h}{m} \right) \left \|x_{k} - x_{\lambda_k}^* \right \|^2 \nonumber \\ &+ 6 m^2 \gamma_k^2 (C_f^2 + \lambda_k^2 C_h^2).
	\end{align}
	Next, we relate $x_k$ to $x_{\lambda_{k-1}}^*$. We have
	\begin{align*}
	&\left \|x_{k} - x_{\lambda_k}^* \right \|^2=\left \|x_{k} - x_{\lambda_{k-1}}^* \right\|^2 + \left \|x_{\lambda_k}^* - x_{\lambda_{k-1}}^* \right\|^2 \\ & + \underbrace{2 \left(x_{k} - x_{\lambda_{k-1}}^* \right)^T \left(x_{\lambda_{k-1}}^* - x_{\lambda_k}^*  \right)}_{Term4}.
	\end{align*}
	Applying the fact that $2a^Tb \leq \|a\|^2/\alpha + \alpha \|b\|^2 $ for all $a,b \in \mathbb{R}^n$ and $\alpha>0$ for $Term4$ when $\alpha = 2m/{\gamma_k \lambda_k \mu_h}$, we obtain
	\begin{align*}
	&\left \|x_{k} - x_{\lambda_k}^* \right \|^2 =\left \|x_{k} - x_{\lambda_{k-1}}^* \right\|^2 + \left \|x_{\lambda_k}^* - x_{\lambda_{k-1}}^* \right\|^2 \\ & + \frac{\gamma_k \lambda_k \mu_h}{2m} \left \|x_{k} - x_{\lambda_{k-1}}^* \right\|^2 + \frac {2m} {\gamma_k \lambda_k \mu_h} \left \|x_{\lambda_k}^* - x_{\lambda_{k-1}}^* \right\|^2\\
	& = \left( 1+ \frac{\gamma_k \lambda_k \mu_h}{2m} \right) \left \|x_{k} - x_{\lambda_{k-1}}^* \right\|^2 + \left(1+ \frac{2m}{\gamma_k \lambda_k \mu_h}\right)\left \|x_{\lambda_k}^* - x_{\lambda_{k-1}}^* \right\|^2. 
	\end{align*}
	Using Lemma \ref{lemma:err_inequalitygeneral}(a), we obtain
	\begin{align*}
	&\left \|x_{k} - x_{\lambda_k}^* \right \|^2  \leq  \left( 1+ \frac{\gamma_k \lambda_k \mu_h}{2m} \right) \left \|x_{k} - x_{\lambda_{k-1}}^* \right\|^2 \\ & + \left(1+ \frac{2m}{\gamma_k \lambda_k \mu_h}\right) \frac{C_h^2}{\mu_h^2}\left	|1-\frac{\lambda_{k-1}}{\lambda_k} \right|^2.
	\end{align*}
	Plugging this inequality into \eqref{incrementalineq2} we obtain
	\begin{align*}
	&\left \|x_{k+1} - x_{\lambda_k}^* \right \|^2  \leq \left( 1- \frac{\gamma_k \lambda_k \mu_h}{m} \right) \left( 1+ \frac{\gamma_k \lambda_k \mu_h}{2m} \right) \left \|x_{k} - x_{\lambda_{k-1}}^* \right \|^2 \\& +\left( 1- \frac{\gamma_k \lambda_k \mu_h}{m} \right) \left(1+ \frac{2m}{\gamma_k \lambda_k \mu_h}\right) \frac{C_h^2}{\mu_h^2}\left	|1-\frac{\lambda_{k-1}}{\lambda_k} \right|^2 \\& +  6 m^2 \gamma_k^2 (C_f^2 + \lambda_k^2 C_h^2). 
	\end{align*}
	The desired result is obtained \fy{from} $0<\mu_k\lambda_k \mu_h \leq 2m$.
\end{proof}
We will make use of the following result in Proposition \ref{conv x_k INC}.
\begin{lemma}[{Lemma 10, pg. 49, \cite{Polyak87}}]  \label {lemma conv lemma}
	Let $\{\nu_k\}$ be a sequence of nonnegative scalars and let $\{\alpha_k\}$ and $\{\beta_k\}$ be scalar sequences such that: 
	\begin{align*}
	&\nu_{k+1} \leq (1-\alpha_k)\nu_k + \beta_k \qquad \hbox{for all } k \geq 0,\\
	&0 \leq \alpha_k \leq 1, \ \beta_k \geq0, \ \sum_{k=0}^{\infty}\alpha_k=\infty, \ \sum_{k=0}^{\infty}\beta_k <\infty,  \ \lim_{k \to \infty} \frac{\beta_k}{\alpha_k}=0.
	\end{align*}
	Then, $\lim_{k \to \infty} \nu_k = 0$.
\end{lemma}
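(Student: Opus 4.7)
The plan is to show $\lim_{k\to\infty}\nu_k=0$ by verifying that $\limsup_{k\to\infty}\nu_k\leq \varepsilon$ for every fixed $\varepsilon>0$. The argument proceeds in three stages: first bound the sequence, then perform a one-step analysis around the level $\varepsilon$, and finally exploit divergence of $\sum\alpha_k$ to force the sequence below $\varepsilon$.

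\textbf{Step 1 (boundedness).} Since $\alpha_k\in[0,1]$ we have $(1-\alpha_k)\nu_k\leq \nu_k$, hence $\nu_{k+1}\leq \nu_k+\beta_k$. Iterating gives $\nu_k\leq \nu_0+\sum_{j=0}^\infty \beta_j<\infty$ by the assumption $\sum\beta_k<\infty$, so $\{\nu_k\}$ is bounded. This is really only a safety net; the main work is in the next two steps.

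\textbf{Step 2 (one-step contraction toward the $\varepsilon$-level).} Fix $\varepsilon>0$. From $\beta_k/\alpha_k\to 0$ choose $K=K(\varepsilon)$ so that $\beta_k\leq \varepsilon\alpha_k$ for every $k\geq K$. Plugging this into the recursion yields, for $k\geq K$,
\begin{align*}
\nu_{k+1}-\varepsilon \;\leq\; (1-\alpha_k)\nu_k+\varepsilon\alpha_k-\varepsilon \;=\; (1-\alpha_k)(\nu_k-\varepsilon).
\end{align*}
I would then split into two cases. If $\nu_k\leq \varepsilon$, the right-hand side is nonpositive, so $\nu_{k+1}\leq \varepsilon$; thus once the sequence enters $[0,\varepsilon]$ at some index $k\geq K$ it remains there for all subsequent indices. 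If on the other hand $\nu_j>\varepsilon$ for every $j\geq K$, I would iterate the displayed inequality to obtain
\begin{align*}
0 \;<\; \nu_{k+1}-\varepsilon \;\leq\; (\nu_K-\varepsilon)\prod_{j=K}^{k}(1-\alpha_j).
\end{align*}

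\textbf{Step 3 (conclusion).} Using the standard estimate $1-x\leq e^{-x}$ for $x\in[0,1]$, the product in the previous display is bounded by $\exp\bigl(-\sum_{j=K}^{k}\alpha_j\bigr)$, which tends to zero as $k\to\infty$ by $\sum\alpha_k=\infty$. Hence in this second case $\limsup_k(\nu_k-\varepsilon)\leq 0$ as well. In either case $\limsup_k\nu_k\leq \varepsilon$, and since $\varepsilon>0$ was arbitrary and $\nu_k\geq 0$, I conclude $\lim_k\nu_k=0$.

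The one delicate point, and where I would concentrate the writing, is the case distinction in Step 2: without it the bound $\nu_{k+1}-\varepsilon\leq (1-\alpha_k)(\nu_k-\varepsilon)$ can be vacuous (the right-hand side can be negative and does not directly give a usable recursion once $\nu_k$ dips below $\varepsilon$). The cleanest way to handle this is the absorbing-barrier observation I used above, which turns the split into a trivial bookkeeping argument rather than requiring a delicate uniform estimate. The hypothesis $\sum\beta_k<\infty$ is, strictly speaking, only used to supply a priori boundedness in Step 1; the core convergence mechanism is driven solely by $\sum\alpha_k=\infty$ together with $\beta_k/\alpha_k\to 0$.
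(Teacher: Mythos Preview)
Your argument is correct. The paper does not supply its own proof of this lemma; it simply quotes it from Polyak's book (Lemma~10, p.~49), so there is nothing to compare against. Your three-step approach---pass to the shifted sequence $\nu_k-\varepsilon$, observe the absorbing barrier at level $\varepsilon$, and kill the complementary case via $\prod(1-\alpha_j)\leq\exp\bigl(-\sum\alpha_j\bigr)\to 0$---is the standard one and is essentially how Polyak himself argues. Your side remark that $\sum_k\beta_k<\infty$ is redundant once $\beta_k/\alpha_k\to 0$ and $\sum_k\alpha_k=\infty$ are assumed is also accurate: Step~1 is never invoked in Steps~2--3, so the lemma actually holds under the weaker hypotheses.
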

\begin{assumption}\label{assumptions for conv INC} Assume that for all $k \geq 0$ we have
	
	\noindent$(a)\{\gamma_k\}$ and $\{\lambda_k\}$ are non-increasing positive sequences with $\gamma_0\lambda_0 \leq \frac{2m}{\mu_h}.$    \\  $(b) \sum_{k=0}^{\infty}\gamma_k\lambda_k= \infty.$   
	$ \qquad (c) \sum_{k=0}^{\infty}\frac{1}{\gamma_k\lambda_k}\left(\frac{\lambda_{k-1}}{\lambda_k}-1\right)^2 <\infty.$\\ $(d) \sum_{k=0}^{\infty} \gamma_k^2 < \infty.$         \\
	$(e) \lim_{k\to \infty} \frac{1}{\gamma_k^2\lambda_k^2}\left(\frac{\lambda_{k-1}}{\lambda_k}-1\right)^2=0.$ $\qquad (f) \lim_{k\to \infty} \frac{\gamma_k}{\lambda_k}=0.$
	
\end{assumption}
\begin{proposition}[{\bf Convergence of $\mathbf{\{x_k\}}$}]\label{conv x_k INC} 
	Consider problem \eqref{def:SL-INC}. Let Assumption \ref{assum:properties} and \ref{assumptions for conv INC} hold and $\{x_k\}$ be generated by Algorithm \ref{algorithm:IRIG}. Then,
	\begin{itemize}
		\item[(a)] $\lim_{k \rightarrow \infty} \|x_k-x_{\lambda_{k-1}}^*\|^2=0$.
		\item[(b)] If $\lim_{k \rightarrow \infty} \lambda_k=0$, $x_k$ converges to the unique optimal solution of problem \eqref{def:SL-INC}, i.e., $x_h^*$.
	\end{itemize}
\end{proposition}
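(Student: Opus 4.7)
The plan is to recognize that the recursive error bound established in Lemma 2 fits precisely into the framework of Polyak's convergence lemma (Lemma 3), and then combine the result with Lemma 1(b) to obtain convergence to $x_h^*$.

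For part (a), I would set $\nu_k := \|x_k - x_{\lambda_{k-1}}^*\|^2$, so that the bound in Lemma 2 takes the form $\nu_{k+1} \le (1-\alpha_k)\nu_k + \beta_k$ with
\[
\alpha_k := \frac{\gamma_k \lambda_k \mu_h}{2m}, \qquad \beta_k := \frac{3m C_h^2}{\gamma_k \lambda_k \mu_h^3}\left|1-\tfrac{\lambda_{k-1}}{\lambda_k}\right|^2 + 6 m^2 \gamma_k^2 (C_f^2 + \lambda_k^2 C_h^2).
\]
The goal is then to verify each hypothesis of Lemma 3. The upper bound $\alpha_k \le 1$ follows from $\gamma_0 \lambda_0 \le 2m/\mu_h$ together with monotonicity of $\{\gamma_k\}$ and $\{\lambda_k\}$ in Assumption 2(a), which simultaneously secures the precondition $\gamma_k \lambda_k \mu_h \le 2m$ needed to invoke Lemma 2. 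Divergence $\sum \alpha_k = \infty$ is immediate from Assumption 2(b), and summability $\sum \beta_k < \infty$ follows by splitting $\beta_k$: the first piece is summable by Assumption 2(c), and the second is dominated by $6m^2(C_f^2 + \lambda_0^2 C_h^2)\gamma_k^2$, which is summable by Assumption 2(d) and $\lambda_k \le \lambda_0$.

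The only delicate step is verifying $\beta_k/\alpha_k \to 0$. I would split this ratio into two pieces. The first piece is proportional to $\tfrac{1}{\gamma_k^2 \lambda_k^2}\left(\tfrac{\lambda_{k-1}}{\lambda_k}-1\right)^2$, which vanishes by Assumption 2(e). The second piece is proportional to $\tfrac{\gamma_k}{\lambda_k}(C_f^2 + \lambda_k^2 C_h^2)$; the $C_f^2$ contribution vanishes by Assumption 2(f), and the $\lambda_k^2 C_h^2$ contribution is bounded by $\lambda_0 C_h^2 \gamma_k$, which vanishes since $\sum \gamma_k^2 < \infty$ forces $\gamma_k \to 0$. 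Lemma 3 then yields $\nu_k \to 0$, which is exactly the claim in part (a). This verification of the ratio condition is the main technical obstacle, as it is the step that most tightly couples the growth rates imposed on $\gamma_k$ and $\lambda_k$.

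For part (b), I would apply the triangle inequality
\[
\|x_k - x_h^*\| \le \|x_k - x_{\lambda_{k-1}}^*\| + \|x_{\lambda_{k-1}}^* - x_h^*\|.
\]
The first term tends to zero by part (a). Under the additional assumption $\lambda_k \to 0$, the second term vanishes by Lemma 1(b), which asserts convergence of the regularization path $\{x_{\lambda_k}^*\}$ to the unique minimizer $x_h^*$ of the bilevel problem. Combining these gives $x_k \to x_h^*$, completing the proof.
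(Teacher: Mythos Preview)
Your proposal is correct and follows essentially the same route as the paper: identical choices of $\nu_k$, $\alpha_k$, $\beta_k$ to feed into Lemma~\ref{lemma conv lemma}, the same decomposition of $\beta_k/\alpha_k$, and the same appeal to Lemma~\ref{lemma:err_inequalitygeneral}(b) after a triangle inequality for part~(b). The only cosmetic differences are that the paper bounds the residual term $\gamma_k\lambda_k$ via $\gamma_k\lambda_k \le \lambda_0^2\,\gamma_k/\lambda_k \to 0$ (Assumption~\ref{assumptions for conv INC}(f)) rather than via $\gamma_k\lambda_k \le \lambda_0\gamma_k \to 0$, and it writes the triangle inequality in squared form.
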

\begin{proof}
	\noindent (a) Consider the result from Lemma \ref{lemma: an error bound INC}. We let
	\begin{align*}
	\nu_k &\triangleq \|x_k - x_{\lambda_{k-1}}^*\|^2, \ \alpha_k \triangleq \frac{\gamma_k\lambda_k \mu_h}{2m}\\ \beta_k &\triangleq \frac{3m C_h^2}{\gamma_k \lambda_k \mu_h^3}\left(1-\frac{\lambda_{k-1}}{\lambda_k} \right)^2 +  6 m^2 \gamma_k^2 (C_f^2 + \lambda_k^2 C_h^2).
	\end{align*}
	From Assumption \ref{assumptions for conv INC}(a,b,c), since $\{\gamma_k\}$ and $\{\lambda_k\} $ are positive and $\gamma_0\lambda_0 \leq 2m/\mu_h$, we have $0 \leq \alpha_k \leq 1$ and  $ \beta_k \geq0$ and also $\sum_{k=1}^{\infty}\alpha_k=\infty$ and $\sum_{k=1}^{\infty}\beta_k <\infty$. To show that all the necessary assumptions for Lemma \ref{lemma conv lemma} are satisfied, we have
	\begin{align*}
	\lim_{k \to \infty} \frac{\beta_k}{\alpha_k}
	&=\frac{6m^2 C_h^2}{\mu_h^4}\lim_{k \to \infty} \frac{1}{\gamma_k^2\lambda_k^2}\left(\frac{\lambda_{k-1}}{\lambda_k}-1\right)^2 +\frac{12m^3C_f^2}{\mu_h}\lim_{k\to \infty} \frac{\gamma_k}{\lambda_k} \\& +
	\frac{12m^3C_h^2}{\mu_h}\lim_{k\to \infty} \gamma_k\lambda_k.
	\end{align*}
	Considering Assumption \ref{assumptions for conv INC}(e,f), we only need to show that $\lim_{k\to \infty} \gamma_k\lambda_k=0$. Since $\{\lambda_k\}$ is non-increasing for all $k\geq 0$, we have $\lambda_0^2 {\gamma_k}/{\lambda_k} \geq \gamma_k\lambda_k$. So by Assumption \ref{assumptions for conv INC}(f), $\lim_{k \to \infty} \gamma_k\lambda_k=0$. Consequently $\lim_{k \to \infty} \frac{\beta_k}{\alpha_k}=0$. Now Lemma \ref{lemma conv lemma} can be applied. We have
	\begin{align*}
	\lim_{k \to \infty} \nu_k = \lim_{k \to \infty} \|x_k - x_{\lambda_{k-1}}^*\|^2=0.
	\end{align*}
	
	\noindent (b) Applying the triangular inequality, we obtain
	\begin{align*}
	\|x_k -x_h^*\|^2 \leq 2\|x_k - x_{\lambda_{k-1}}^* \|^2 + 2 \|x_{\lambda_{k-1}}^*-x_h^*\|^2, \ \hbox{for all } k \geq 0.
	\end{align*}
	From part (a), $\|x_k -x_h^*\|^2$ converges to zero. Also, from Lemma \ref{lemma:err_inequalitygeneral}(b), we know that when $\lambda_k \rightarrow 0$ the sequence $\{x_{\lambda_k}^*\}$ converges to the unique optimal solution of problem \eqref{def:SL-INC}, i.e., $x_h^*$. Therefore the result holds.
\end{proof}
To have previous proposition work, we require that sequences $\{\gamma_k\}$ and $\{\lambda_k\}$ satisfy Assumption \ref{assumptions for conv INC}. Below, we provide a set of feasible sequences for this assumption. The proof is analogous to that of Lemma 5 in \cite{Yousefian17}.

\begin{lemma}\label{lemma condition for sequences INC}
	Assume $\{\gamma_k\}$ and $\{\lambda_k\}$ are sequences such that $\gamma_k=\frac{\gamma_0}{(k+1)^a}$ and $\lambda_k=\frac{\lambda_0}{(k+1)^b}$ where $a,b,\gamma_0$ and $\lambda_0$ are positive scalars and $\gamma_0\lambda_0 \leq \frac{2m}{\mu_h}$. If $a>b$, $a>0.5$ and $a+b<1$,  then the sequences $\{\gamma_k\}$ and $\{\lambda_k\}$ satisfy Assumption \ref{assumptions for conv INC}. 
\end{lemma}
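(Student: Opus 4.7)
The plan is to verify the six conditions (a)--(f) of Assumption \ref{assumptions for conv INC} one by one, using only standard estimates for power sequences plus a single elementary bound on the ratio $\lambda_{k-1}/\lambda_k$. The key preliminary observation, from which conditions (c) and (e) will follow, is that since $a>0.5$ and $a+b<1$ force $0<b<0.5$, concavity of $t\mapsto t^b$ on $[1,\infty)$ yields
\[
\frac{\lambda_{k-1}}{\lambda_k}-1 \;=\; \left(1+\tfrac{1}{k}\right)^{b}-1 \;\leq\; \frac{b}{k},\qquad k\geq 1,
\]
and this quantity is also nonnegative because $\{\lambda_k\}$ is decreasing. Squaring produces $(\lambda_{k-1}/\lambda_k-1)^2 \leq b^2/k^2$, which is the only nontrivial estimate needed.

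Next I would dispatch the remaining conditions by direct computation with $\gamma_k\lambda_k=\gamma_0\lambda_0(k+1)^{-(a+b)}$ and $\gamma_k^2=\gamma_0^2(k+1)^{-2a}$. Condition (a) is immediate since $a,b>0$ make both sequences positive and non-increasing and the product bound is assumed. For (b), $\sum_{k\geq 0}(k+1)^{-(a+b)}=\infty$ because $a+b<1$. For (d), $\sum_{k\geq 0}(k+1)^{-2a}<\infty$ because $2a>1$. For (f), $\gamma_k/\lambda_k=(\gamma_0/\lambda_0)(k+1)^{b-a}\to 0$ because $a>b$.

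For the two conditions that mix both sequences with the ratio bound, I plug the estimate from the first paragraph into the summand of (c): for $k\geq 1$,
\[
\frac{1}{\gamma_k\lambda_k}\left(\frac{\lambda_{k-1}}{\lambda_k}-1\right)^2 \;\leq\; \frac{b^2}{\gamma_0\lambda_0}\cdot\frac{(k+1)^{a+b}}{k^2},
\]
whose tail is summable exactly because $a+b-2<-1$, i.e., $a+b<1$. For (e), the same estimate gives
\[
\frac{1}{\gamma_k^2\lambda_k^2}\left(\frac{\lambda_{k-1}}{\lambda_k}-1\right)^2 \;\leq\; \frac{b^2}{\gamma_0^2\lambda_0^2}\cdot\frac{(k+1)^{2(a+b)}}{k^2},
\]
which tends to $0$ because $2(a+b)-2<0$, again by $a+b<1$.

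There is no real obstacle here; everything reduces to $p$-series comparisons once the ratio $\lambda_{k-1}/\lambda_k-1$ is controlled. The only place where a tiny bit of care is required is the bound $(1+1/k)^b-1\leq b/k$, which holds whenever $b\in(0,1]$ and is guaranteed by the hypothesis $a+b<1$ together with $a>0$. With that single estimate in hand, each of (a)--(f) follows in one line.
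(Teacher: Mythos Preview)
Your verification is correct and is exactly the direct check one expects here; the paper itself omits the argument and simply points to an analogous result in \cite{Yousefian17}, so your condition-by-condition treatment using the Bernoulli/concavity bound $(1+1/k)^b-1\le b/k$ (valid because $0<b<1$ follows from $a>0$ and $a+b<1$) together with $p$-series comparisons is precisely the intended route.
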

The following is a useful lemma in proving convergence that we will apply in Theorem \ref{thm conv for xbar INC}.

\begin{lemma}[{Theorem 6, pg.\ 75 of~\cite{Knopp51}}] \label{lemma thm for avr}
	Let $\{u_t\}\subset \mathbb{R}^n$ be a convergent sequence with the limit point $\hat u\in\mathbb{R}^n$ and let $\{\alpha_k\}$ be a
	sequence of positive numbers where $\sum_{k=0}^\infty \alpha_k=\infty$. Suppose $\{v_k\}$ is given by 
	$v_k\triangleq \left({\sum_{t=0}^{k-1} \alpha_t u_t}\right)/{\sum_{t=0}^{k-1} \alpha_t}$ for all $k\ge1$. Then, $\lim_{k \rightarrow \infty} v_k=\hat u.$
\end{lemma}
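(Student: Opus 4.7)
The plan is to prove this as a weighted Cesàro-type convergence result. The idea is to write the error $v_k - \hat u$ as a weighted average of the residuals $u_t - \hat u$, split the sum at an index where the tail residuals are small, and then exploit the divergence of $\sum \alpha_k$ to kill off the head contribution.

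First I would observe that since $\sum_{t=0}^{k-1}\alpha_t / \sum_{t=0}^{k-1}\alpha_t = 1$, we have
\begin{align*}
v_k - \hat u = \frac{\sum_{t=0}^{k-1}\alpha_t(u_t - \hat u)}{\sum_{t=0}^{k-1}\alpha_t}.
\end{align*}
Let $\varepsilon>0$ be arbitrary. By convergence of $\{u_t\}$ to $\hat u$, there exists $N(\varepsilon)$ such that $\|u_t-\hat u\|\le \varepsilon/2$ for every $t\ge N(\varepsilon)$. I would then decompose the sum at index $N(\varepsilon)$:
\begin{align*}
\|v_k - \hat u\| \le \frac{\sum_{t=0}^{N(\varepsilon)-1}\alpha_t\|u_t-\hat u\|}{\sum_{t=0}^{k-1}\alpha_t} + \frac{\sum_{t=N(\varepsilon)}^{k-1}\alpha_t\|u_t-\hat u\|}{\sum_{t=0}^{k-1}\alpha_t}.
\end{align*}

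For the second (tail) term, the bound $\|u_t-\hat u\|\le \varepsilon/2$ together with nonnegativity of $\alpha_t$ immediately gives an upper bound of $\varepsilon/2 \cdot \sum_{t=N(\varepsilon)}^{k-1}\alpha_t \big/ \sum_{t=0}^{k-1}\alpha_t \le \varepsilon/2$, uniformly in $k$. For the first (head) term, the numerator is a fixed finite constant $M(\varepsilon) := \sum_{t=0}^{N(\varepsilon)-1}\alpha_t\|u_t-\hat u\|$ that does not depend on $k$, while by the hypothesis $\sum_{k=0}^{\infty}\alpha_k=\infty$ the denominator $\sum_{t=0}^{k-1}\alpha_t$ diverges to $+\infty$ as $k\to\infty$. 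Hence I can choose $K(\varepsilon)\ge N(\varepsilon)$ large enough so that $M(\varepsilon)/\sum_{t=0}^{k-1}\alpha_t \le \varepsilon/2$ for all $k\ge K(\varepsilon)$, which combined with the tail estimate yields $\|v_k-\hat u\|\le \varepsilon$ for all $k\ge K(\varepsilon)$, proving $v_k\to \hat u$.

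There is no real obstacle here; the only subtle point is making sure the head/tail split is carried out in the correct order, namely first fix $\varepsilon$, then choose $N(\varepsilon)$ from the convergence of $\{u_t\}$, and only afterwards choose $K(\varepsilon)$ large enough to swamp the fixed numerator $M(\varepsilon)$ using divergence of $\sum \alpha_t$. Since the statement is a cited classical result (Theorem 6, pg.\ 75 of~\cite{Knopp51}), the proposal above is essentially the textbook argument and one may prefer to simply invoke the reference rather than reproduce it.
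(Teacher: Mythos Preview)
Your argument is correct and is precisely the standard weighted Ces\`aro proof. The paper does not supply its own proof of this lemma at all; it simply cites Theorem~6, pg.~75 of~\cite{Knopp51}, so your proposal in fact goes beyond what the paper does and matches the classical textbook argument.
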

Now, we can illustrate our ultimate goal in this section which is showing the convergence of the sequence $\{\bar{x_k}\}$ generated by Algorithm \ref{algorithm:IRIG} to $x_h^*$
\begin{theorem}[{\bf Convergence of $\mathbf{\{\bar x_k\}}$}] \label{thm conv for xbar INC}
	Consider problem \eqref{def:SL-INC}. Let Assumption \ref{assum:properties} hold. Also assume $\{\gamma_k\}$ and $\{\lambda_k\}$ are sequences such that $\gamma_k=\frac{\gamma_0}{(k+1)^a}$ and $\lambda_k=\frac{\lambda_0}{(k+1)^b}$ where $a,b,\gamma_0$ and $\lambda_0$ are positive scalars and $\gamma_0\lambda_0 \leq \frac{2m}{\mu_h}$. Let $\{\bar x_k\}$ be generated by Algorithm \ref{algorithm:IRIG}.  If $a>b$, $a>0.5$, $a+b<1$ and $ar \leq 1$, then $\{\bar x_k\}$ converges to $x_h^*$.
\end{theorem}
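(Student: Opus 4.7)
The plan is to combine three of the ingredients already assembled in the excerpt: the sequence-feasibility check in Lemma \ref{lemma condition for sequences INC}, the convergence of the iterates in Proposition \ref{conv x_k INC}, and the Toeplitz-type averaging result in Lemma \ref{lemma thm for avr}. First, I would verify that under the hypotheses $a>b$, $a>0.5$, $a+b<1$, and $\gamma_0\lambda_0 \le 2m/\mu_h$, the step-size/regularization pair $(\gamma_k,\lambda_k)$ with $\gamma_k=\gamma_0/(k+1)^a$ and $\lambda_k=\lambda_0/(k+1)^b$ fulfills all six items of Assumption \ref{assumptions for conv INC}; this is precisely what Lemma \ref{lemma condition for sequences INC} asserts, so it is just a citation.

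Next, since $b>0$ we have $\lambda_k \to 0$, so Proposition \ref{conv x_k INC}(b) applies under Assumption \ref{assum:properties} and Assumption \ref{assumptions for conv INC}, yielding the non-averaged convergence $x_k \to x_h^*$ (where $x_h^*$ is the unique optimal solution of \eqref{def:SL-INC}, which exists and is unique by Assumption \ref{assum:properties}(c) together with Remark \ref{bound on gradient by beck}(c)).

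The remaining task is to lift convergence of $\{x_k\}$ to convergence of the weighted-average sequence $\{\bar x_k\}$ defined in \eqref{weighted alg INC}. I would apply Lemma \ref{lemma thm for avr} with $u_t := x_t$, $\hat u := x_h^*$, and the positive weights $\alpha_t := \gamma_t^r = \gamma_0^r/(t+1)^{ar}$. The hypothesis of that lemma that still needs to be checked is $\sum_{t=0}^{\infty}\alpha_t = \infty$, and this is exactly where the assumption $ar \le 1$ enters: since $\gamma_t^r$ decays like $(t+1)^{-ar}$ with $ar\le 1$, the series $\sum_t \gamma_t^r$ is divergent by comparison with the harmonic series (or its sub-harmonic relatives). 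Then Lemma \ref{lemma thm for avr} gives $\bar x_k \to x_h^*$, which is the desired conclusion.

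The proof is essentially a bookkeeping argument, so I do not foresee a genuine analytical obstacle; the only subtle point is making sure that the averaging weights are compatible with the step-size schedule, i.e.\ that the additional parameter $r$ does not interfere with the conditions already imposed on $a$ and $b$. Since the divergence condition $\sum \gamma_t^r = \infty$ is the sole new requirement and is secured by $ar \le 1$, the argument closes cleanly.
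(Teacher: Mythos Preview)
Your proposal is correct and follows essentially the same route as the paper: invoke Lemma \ref{lemma condition for sequences INC} to certify Assumption \ref{assumptions for conv INC}, use Proposition \ref{conv x_k INC}(b) (with $\lambda_k\to 0$ since $b>0$) to get $x_k\to x_h^*$, and then apply the Toeplitz-type Lemma \ref{lemma thm for avr} with weights $\alpha_t=\gamma_t^r$, where the divergence $\sum_t\gamma_t^r=\infty$ is guaranteed by $ar\le 1$. The only cosmetic difference is that the paper first passes through the triangle inequality and applies Lemma \ref{lemma thm for avr} to the scalar sequence $u_t=\|x_t-x_h^*\|$, whereas you apply it directly to the vector sequence $u_t=x_t$; since the lemma is stated for sequences in $\mathbb{R}^n$, both versions work.
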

\begin{proof}
	Considering the given assumptions, by Lemma \ref{lemma condition for sequences INC} we can see that Assumption \ref{assumptions for conv INC} holds. We have
	\begin{align*}
	\|\bar x_{k+1}-x_h^*\|=\left\|\sum_{t=0}^{k} \psi_{t,k} x_t-\sum_{t=0}^{k}\psi_{t,k} x_h^*\right\|\leq \sum_{t=0}^{k} \psi_{t,k}\|x_t-x_h^*\|,
	\end{align*}
	applying $\sum_{t=0}^{k} \psi_{t,k}=1$ from \eqref{weighted alg INC} and the triangular inequality.
	Now consider the definition of $\psi_k$ and let $\alpha_t \triangleq \gamma_t^r$, $u_t \triangleq \|x_t-x_h^*\|$ and $v_{k+1} \triangleq \sum_{t=0}^{k} \psi_{t,k} \|x_t-x_h^*\|$. Since $ar \leq1$ we have $\sum_{t=0}^\infty \alpha_t=\sum_{t=0}^\infty \gamma_t^{r}=\sum_{t=0}^\infty (t+1)^{-ar}=\infty$. The sequence $\{\lambda_t \}$ is decreasing to zero due to $b>0$, So, from Proposition \ref{conv x_k INC}(b), $u_t=\|x_t-x_h^*\|$ converges to zero. Therefore, for $\hat u=0$ we can apply Lemma \ref{lemma thm for avr} and thus, $\|\bar x_{k+1}-x_h^*\|$ converges to zero.
\end{proof}


\section{Rate analysis}\label{sec:rate result}
In this section, we first find an error bound with respect to the optimal values of the lower level function $f$ which indeed shows the feasibility of \fy{the problem} \eqref{def:SL-INC}. Then, we apply it to derive a convergence rate for the algorithm.
\begin{lemma} \label{lemma averaging} Consider the sequence $\{\bar x_N\}$ generated by Algorithm \ref{algorithm:IRIG}. Let Assumption \ref{assum:properties} hold and $\{\gamma_k\}$ and $\{\lambda_k\}$ be positive and non-increasing sequences. Then, for all $N\geq 1$ and $z\in X$ we have
	\begin{align*}
	&f(\bar x_N)-f^*\leq \left(\sum_{k=0}^{N-1} \gamma_k^r\right)^{-1} \left( m \sum_{k=0}^{N-1} \gamma_k^{r+1} (C_f^2 + \lambda_k^2 C_h^2) \right. \cr\\& \left. 
	+m^2 C_f \sum_{k=0}^{N-1} \gamma_k^{r+1} \left( C_f + \lambda_k C_h \right) + 2 M_h  \sum_{k=0}^{N-1} \gamma_k^r \lambda_k + 2M^2\gamma_{N-1}^{r-1} \right),
	\end{align*}
	where $M_h, M$ are scalars such that $\|h(x)\|\leq M_h$, $\|x\| \leq M$ for all $x \in X$.
\end{lemma}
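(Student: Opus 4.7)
The strategy is to reuse the per-iterate machinery developed in the proof of Lemma \ref{lemma: an error bound INC}, but with two modifications. First, replace the anchor point $x_{\lambda_k}^*$ by an arbitrary $z\in X$: this turns the inequality into one for the \emph{primal} suboptimality $f(x_k)-f(z)$ rather than for the iterate error. Second, instead of exploiting strong convexity of $h$ to get a contraction (which required the $z=x_{\lambda_k}^*$ anchor), I will perform a weighted outer telescoping with weights $\gamma_k^{r-1}$ aligned with the averaging weights $\psi_{t,k}=\gamma_t^r/\sum_{i=0}^k \gamma_i^r$ from \eqref{weighted alg INC}, and then invoke Jensen's inequality on the convex function $f$ to pass from $\sum \psi_{t,N-1} f(x_t)$ down to $f(\bar x_N)$. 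Finally, set $z=x^*$ with $f(x^*)=f^*$.

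\textbf{Per-iterate descent-type inequality.} Starting from \eqref{mainstep} and nonexpansiveness of $\mathcal{P}_X$, expanding the square gives, for any $z\in X$ and any inner index $i$,
\begin{align*}
\|x_{k,i+1}-z\|^2 &\leq \|x_{k,i}-z\|^2 + 2\gamma_k^2\bigl(C_f^2+\lambda_k^2 C_h^2\bigr) \\
&\quad -2\gamma_k\bigl(f_{i+1}(x_{k,i})-f_{i+1}(z)\bigr)-\tfrac{2\gamma_k\lambda_k}{m}\bigl(h(x_{k,i})-h(z)\bigr),
\end{align*}
using Remark \ref{bound on gradient by beck}(a) and the subgradient inequality applied separately to $f_{i+1}$ and $h$. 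Summing over $i=0,\dots,m-1$ telescopes the iterate terms to $\|x_{k+1}-z\|^2-\|x_k-z\|^2$. To convert $\sum_i f_{i+1}(x_{k,i})$ into $f(x_k)$, I add and subtract $f_{i+1}(x_k)$, use the Lipschitz bound from Remark \ref{bound on gradient by beck}(b), and substitute the path estimate \eqref{ineq2 INC}; for the $h$ contribution I use the uniform bound $|h(y)|\leq M_h$ on $X$. After rearrangement this yields an inequality of the form
\begin{align*}
2\gamma_k\bigl(f(x_k)-f(z)\bigr) &\leq \|x_k-z\|^2-\|x_{k+1}-z\|^2 \\
&\quad + 2m\gamma_k^2(C_f^2+\lambda_k^2 C_h^2) + m^2 C_f\gamma_k^2(C_f+\lambda_k C_h) + 4\gamma_k\lambda_k M_h.
\end{align*}

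\textbf{Weighted outer sum and Abel summation.} Multiplying through by $\gamma_k^{r-1}$ and summing $k=0,\dots,N-1$, the only delicate piece is $\sum_{k=0}^{N-1}\gamma_k^{r-1}(\|x_k-z\|^2-\|x_{k+1}-z\|^2)$. Abel summation rewrites this as
\begin{align*}
\gamma_0^{r-1}\|x_0-z\|^2 - \gamma_{N-1}^{r-1}\|x_N-z\|^2 + \sum_{k=1}^{N-1}(\gamma_k^{r-1}-\gamma_{k-1}^{r-1})\|x_k-z\|^2.
\end{align*}
Since $r<1$ and $\{\gamma_k\}$ is non-increasing, $\gamma_k^{r-1}$ is non-decreasing, so the increments are nonnegative; bounding $\|x_k-z\|^2\leq 4M^2$ by compactness collapses the telescoping to at most $4M^2\gamma_{N-1}^{r-1}$. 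Dividing by $2\sum_{k=0}^{N-1}\gamma_k^r$, using Jensen's inequality with $\bar x_N=\sum_{t=0}^{N-1}\psi_{t,N-1}x_t$ and convexity of $f$, and finally taking $z=x^*\in X^*$, gives the claimed bound up to the constants displayed in the statement.

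\textbf{Main obstacle.} The routine part is the per-iterate bound, which parallels the first half of Lemma \ref{lemma: an error bound INC}. The subtle step is the Abel summation with the non-constant weights $\gamma_k^{r-1}$: it is precisely here that the restriction $r<1$ is invoked, to make $\gamma_k^{r-1}$ non-decreasing and thus to control the weighted telescoping by the single boundary term $\gamma_{N-1}^{r-1}$. Tracking constants correctly and matching the exact $\gamma_k^{r+1}$ versus $\gamma_k^r\lambda_k$ powers that appear in the four right-hand-side sums is the only place where care is needed.
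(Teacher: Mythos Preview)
Your proposal is correct and follows essentially the same route as the paper's proof: derive the per-iterate inequality \eqref{ineq3 INC} with anchor $z=x^*$ (using the subgradient inequality, Remark \ref{bound on gradient by beck}, and the drift bound \eqref{ineq2 INC}), then multiply by $\gamma_k^{r-1}$, telescope via what you call Abel summation (the paper writes this as adding and subtracting $\gamma_{k-1}^{r-1}\|x_k-x^*\|^2$ and treating $k=0$ separately), bound the increments using $r<1$ and $\|x_k-x^*\|^2\leq 4M^2$, divide by $2\sum_k\gamma_k^r$, and finish with Jensen. The only differences are organizational (you sum $k=0,\dots,N-1$ at once rather than splitting off $k=0$) and in the looseness of one constant in the $m^2C_f$ term, neither of which affects the argument.
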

\begin{proof}
	\fy{Similar to} relation \eqref{ineq1 INC}, we \fy{can have}
	\begin{align*}
	&\left \|x_{k+1} - x^* \right \|^2  \leq \left \|x_k - x^* \right \|^2 + 2 m \gamma_k^2 (C_f^2 + \lambda_k^2 C_h^2) \\
	& - 2\gamma_k \sum_{i=0}^{m-1} \left( f_{i+1}(x_{k,i}) + \frac{\lambda_k}{m} h(x_{k,i}) \right) + 2\gamma_k \left(f^*+ \lambda_k h(x^*) \right) \\
	& \leq \left \|x_k - x^* \right \|^2 + 2 m \gamma_k^2 (C_f^2 + \lambda_k^2 C_h^2) \\
	& - 2\gamma_k \sum_{i=0}^{m-1} f_{i+1}(x_{k,i}) + 2\gamma_k f^*+ 4\gamma_k \lambda_k M_h.
	\end{align*}
	Adding and subtracting $f(x_k)$, we obtain
	\begin{align*}
	&\left \|x_{k+1} - x^* \right \|^2  \leq \left \|x_k - x^* \right \|^2 + 2 m \gamma_k^2 (C_f^2 + \lambda_k^2 C_h^2) \\
	& - 2\gamma_k \sum_{i=0}^{m-1} \left( f_{i+1}(x_{k,i}) - f_i(x_k) \right) + 2\gamma_k \left( f^* - f(x_k)\right)+ 4\gamma_k \lambda_k M_h\\
	& \leq \left \|x_k - x^* \right \|^2 + 2 m \gamma_k^2 (C_f^2 + \lambda_k^2 C_h^2) \\
	& + 2\gamma_k \sum_{i=0}^{m-1} \left| f_{i+1}(x_{k,i}) - f_i(x_k) \right| + 2\gamma_k \left( f^* - f(x_k)\right)+ 4\gamma_k \lambda_k M_h.
	\end{align*}
	Applying Remark \ref{bound on gradient by beck}(b), $| f_{i+1}(x_{k,i}) - f_i(x_k)| \leq C_f\|x_{k,i} - x_k\|$, we have
	\begin{align*}
	&\left \|x_{k+1} - x^* \right \|^2  \leq \left \|x_k - x^* \right \|^2 + 2 m \gamma_k^2 (C_f^2 + \lambda_k^2 C_h^2) \\
	& + 2 C_f \gamma_k \sum_{i=0}^{m-1} \|x_{k,i} - x_k\| + 2\gamma_k \left( f^* - f(x_k)\right)+ 4\gamma_k \lambda_k M_h.
	\end{align*}
	Using the inequality \eqref{ineq2 INC}, we obtain
	\begin{align}
	&\left \|x_{k+1} - x^* \right \|^2  -  \left \|x_k - x^* \right \|^2  \leq 2\gamma_k \left( f^* - f(x_k)\right) \nonumber\\
	& + 2 m \gamma_k^2 (C_f^2 + \lambda_k^2 C_h^2)   + 2 m^2 C_f \gamma_k^2 \left( C_f + \lambda_k C_h \right) + 4\gamma_k \lambda_k M_h.\label{ineq3 INC} 
	\end{align}
	Multiplying both sides by $\gamma_k^{r-1}$ and adding and subtracting  $\gamma_{k-1}^{r-1} \|x_k - x^*\|^2$ to the left hand side, we have
	\begin{align*}
	&\gamma_k^{r-1} \left \|x_{k+1} - x^* \right \|^2  - \gamma_{k-1}^{r-1} \|x_k - x^*\|^2  + \left( \gamma_{k-1}^{r-1}-\gamma_k^{r-1}\right)  \left \|x_k - x^* \right \|^2 \\& \leq 2 \gamma_k^r \left( f^* - f(x_k)\right) + 2 m \gamma_k^{r+1} (C_f^2 + \lambda_k^2 C_h^2) \\
	& + 2 m^2 C_f \gamma_k^{r+1} \left( C_f + \lambda_k C_h \right) + 4 \gamma_k^r \lambda_k M_h.
	\end{align*}
	Since $\{\gamma_k\}$ in non-increasing and $r<1$ we have $ \gamma_{k-1}^{r-1} \leq \gamma_k^{r-1} $. Also, by the triangle inequality $\left \|x_k - x^* \right \|^2 \leq 2\|x_k\|^2+2\|x^*\|^2 \leq 4M^2$. So, we obtain
	\begin{align*}
	&\gamma_k^{r-1} \left \|x_{k+1} - x^* \right \|^2  - \gamma_{k-1}^{r-1} \|x_k - x^*\|^2  + 4M^2\left( \gamma_{k-1}^{r-1}-\gamma_k^{r-1}\right)\\ & \leq 2 \gamma_k^r \left( f^* - f(x_k)\right) + 2 m \gamma_k^{r+1} (C_f^2 + \lambda_k^2 C_h^2) \\
	& + 2 m^2 C_f \gamma_k^{r+1} \left( C_f + \lambda_k C_h \right) + 4 \gamma_k^r \lambda_k M_h.
	\end{align*}
	Taking summation over $k=1,2, \cdots, N-1$, we obtain
	\begin{align*}
	&\gamma_{N-1}^{r-1} \left \|x_N - x^* \right \|^2  - \gamma_{0}^{r-1} \|x_1 - x^*\|^2 + 4M^2\left( \gamma_{0}^{r-1}-\gamma_{N-1}^{r-1}\right)   \\& \leq 2 \sum_{k=1}^{N-1} \gamma_k^r \left( f^* - f(x_k)\right)  \label{ineq4 INC}+ 2 m \sum_{k=1}^{N-1} \gamma_k^{r+1} (C_f^2 + \lambda_k^2 C_h^2) 
	\\ & + 2 m^2 C_f \sum_{k=1}^{N-1} \gamma_k^{r+1} \left( C_f + \lambda_k C_h \right) + 4 M_h \sum_{k=1}^{N-1} \gamma_k^r \lambda_k .
	\end{align*}
	Removing non-negative terms from the left-hand side of the preceding inequality, we have
	\begin{align}
	&- \gamma_{0}^{r-1} \|x_1 - x^*\|^2  - 4M^2\gamma_{N-1}^{r-1}  \leq 2 \sum_{k=1}^{N-1} \gamma_k^r \left( f^* - f(x_k)\right) \nonumber \\ & + 2 m \sum_{k=1}^{N-1} \gamma_k^{r+1} (C_f^2 + \lambda_k^2 C_h^2) 
	+ 2 m^2 C_f \sum_{k=1}^{N-1} \gamma_k^{r+1} \left( C_f + \lambda_k C_h \right) \nonumber \\& + 4 M_h \sum_{k=1}^{N-1} \gamma_k^r \lambda_k . 
	\end{align}
	From \eqref{ineq3 INC} for $k=0$, we obtain
	\begin{align*}
	\left \|x_1 - x^* \right \|^2  & \leq 2\gamma_0 \left( f^* - f(x_0)\right) + 2 m \gamma_0^2 (C_f^2 + \lambda_0^2 C_h^2)  \\
	& + 2 m^2 C_f \gamma_0^2 \left( C_f + \lambda_0 C_h \right) + 4\gamma_0 \lambda_0 M_h + 4M^2. 
	\end{align*}
	By multiplying both sides of the preceding inequality by $\gamma_{0}^{r-1}$ and summing it with the relation \eqref{ineq4 INC}, we obtain
	\begin{align*}
	&- 4M^2\gamma_{N-1}^{r-1}  \leq 2 \sum_{k=0}^{N-1} \gamma_k^r \left( f^* - f(x_k)\right)  + 4 M_h \sum_{k=0}^{N-1} \gamma_k^r \lambda_k \\& + 2 m \sum_{k=0}^{N-1} \gamma_k^{r+1} (C_f^2 + \lambda_k^2 C_h^2) 
	 + 2 m^2 C_f \sum_{k=0}^{N-1} \gamma_k^{r+1} \left( C_f + \lambda_k C_h \right)  . 
	\end{align*}
	Rearranging the terms we have
	\begin{align*}
	&2 \sum_{k=0}^{N-1} \gamma_k^r \left( f(x_k) - f^*\right)  \leq  2 m \sum_{k=0}^{N-1} \gamma_k^{r+1} (C_f^2 + \lambda_k^2 C_h^2) \\& 
	+ 2 m^2 C_f \sum_{k=0}^{N-1} \gamma_k^{r+1} \left( C_f + \lambda_k C_h \right) + 4 M_h \sum_{k=0}^{N-1} \gamma_k^r \lambda_k + 4M^2\gamma_{N-1}^{r-1} . 
	\end{align*}
	Now we divide both sides by $2\sum_{k=0}^{N-1} \gamma_k^r$ and use the definition of $\psi_{k,N-1}$ in \eqref{weighted alg INC},
	\begin{align*}
	&\sum_{k=0}^{N-1} \psi_{k,N-1} \left( f(x_k) - f^*\right) \\& \leq   \left(\sum_{k=0}^{N-1} \gamma_k^r\right)^{-1} \left( m \sum_{k=0}^{N-1} \gamma_k^{r+1} (C_f^2 + \lambda_k^2 C_h^2) \right. \cr\\& \left. 
	+m^2 C_f \sum_{k=0}^{N-1} \gamma_k^{r+1} \left( C_f + \lambda_k C_h \right) + 2 M_h  \sum_{k=0}^{N-1} \gamma_k^r \lambda_k + 2M^2\gamma_{N-1}^{r-1} \right). 
	\end{align*}
	We know $\sum_{k=0}^{N-1} \psi_{k,N-1}=1$  also $f(\bar x_N) \leq \sum_{k=0}^{N-1} \psi_{k,N-1}f(x_k)$ because of convexity of $f$. So, we obtain the desired result.
\end{proof}
The following lemma, will be used to find a convergence rate statement in Theorem \ref{thm rate}.

\begin{lemma} [Lemma 9, page 418 in \cite{Yousefian17}]\label{lemma:ineqHarmonic}
	For any scalar $\alpha\neq -1$ and integers $\ell$ and $N$ where $0\leq \ell \leq N-1$, we have
	\begin{align*}
	\frac{N^{\alpha+1}-(\ell+1)^{\alpha+1}}{\alpha+1}&\leq \sum_{k=\ell}^{N-1}(k+1)^\alpha \\& \leq (\ell+1)^\alpha+\frac{(N+1)^{\alpha+1}-(\ell+1)^{\alpha+1}}{\alpha+1}.
	\end{align*}
\end{lemma}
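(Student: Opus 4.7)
The plan is to prove the two-sided bound via a standard integral comparison for power sums. First I would reindex by $j=k+1$ to rewrite
$\sum_{k=\ell}^{N-1}(k+1)^{\alpha}=\sum_{j=\ell+1}^{N}j^{\alpha}$,
and work with $f(x)=x^{\alpha}$ on $[1,\infty)$, whose monotonicity is dictated by the sign of $\alpha$. In all cases I would use the identity $\int_a^b x^{\alpha}\,dx=\tfrac{b^{\alpha+1}-a^{\alpha+1}}{\alpha+1}$ (valid since $\alpha\neq -1$) and keep careful track of the sign of $\alpha+1$.

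If $\alpha\geq 0$, the function $f$ is non-decreasing, so for each $j\in\{\ell+1,\dots,N\}$ I have $\int_{j-1}^{j}x^{\alpha}\,dx\leq j^{\alpha}\leq \int_{j}^{j+1}x^{\alpha}\,dx$. Summing telescopes the left side to $\int_{\ell}^{N}x^{\alpha}\,dx=\tfrac{N^{\alpha+1}-\ell^{\alpha+1}}{\alpha+1}$ and the right side to $\int_{\ell+1}^{N+1}x^{\alpha}\,dx=\tfrac{(N+1)^{\alpha+1}-(\ell+1)^{\alpha+1}}{\alpha+1}$. Since $(\ell+1)^{\alpha+1}\geq \ell^{\alpha+1}$, the claimed lower bound (with $(\ell+1)^{\alpha+1}$ in place of $\ell^{\alpha+1}$) is weaker than what we derived, and the extra additive $(\ell+1)^{\alpha}$ in the claimed upper bound only loosens it further, so both inequalities follow.

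If $\alpha<0$ (with $\alpha\neq -1$), then $f$ is strictly decreasing on $(0,\infty)$, so the integral-to-summand inequalities reverse: $\int_{j}^{j+1}x^{\alpha}\,dx\leq j^{\alpha}\leq \int_{j-1}^{j}x^{\alpha}\,dx$. Telescoping the lower inequality gives $\sum_{j=\ell+1}^{N}j^{\alpha}\geq \int_{\ell+1}^{N+1}x^{\alpha}\,dx$; to reach the claimed lower bound I would compare $\tfrac{(N+1)^{\alpha+1}-(\ell+1)^{\alpha+1}}{\alpha+1}$ with $\tfrac{N^{\alpha+1}-(\ell+1)^{\alpha+1}}{\alpha+1}$, noting that in the sub-case $-1<\alpha<0$ we have $\alpha+1>0$ and $N^{\alpha+1}\leq(N+1)^{\alpha+1}$ (so the claimed bound is smaller), while in the sub-case $\alpha<-1$ we have $\alpha+1<0$ and $N^{\alpha+1}\geq(N+1)^{\alpha+1}$, which after division by the negative $\alpha+1$ again yields the desired direction. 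For the upper bound, I would peel off the first term,
$\sum_{j=\ell+1}^{N}j^{\alpha}=(\ell+1)^{\alpha}+\sum_{j=\ell+2}^{N}j^{\alpha}\leq (\ell+1)^{\alpha}+\int_{\ell+1}^{N}x^{\alpha}\,dx$,
and then enlarge the upper limit of integration from $N$ to $N+1$ using the same sign-of-$\alpha+1$ argument to obtain the stated right-hand side.

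The main obstacle will be exactly this sign bookkeeping: the algebraic comparisons between $\tfrac{N^{\alpha+1}-\cdot}{\alpha+1}$, $\tfrac{(N+1)^{\alpha+1}-\cdot}{\alpha+1}$, and $\tfrac{\ell^{\alpha+1}-\cdot}{\alpha+1}$ flip direction depending on whether $\alpha+1$ is positive or negative, and the stated lemma absorbs this by uniformly using $(\ell+1)^{\alpha+1}$ in the lower bound and padding the upper bound by $(\ell+1)^{\alpha}$. Partitioning the argument into the three regimes $\alpha\geq 0$, $-1<\alpha<0$, and $\alpha<-1$ rather than attempting one uniform chain of inequalities is the cleanest route, with the integral comparison carrying essentially all of the work in each case.
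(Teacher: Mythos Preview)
The paper does not prove this lemma; it is quoted from \cite{Yousefian17} and simply invoked in the rate analysis. Your integral-comparison argument, split into the regimes $\alpha\geq 0$, $-1<\alpha<0$, and $\alpha<-1$, is correct and is exactly the standard way such power-sum bounds are established, so there is nothing to compare against here.
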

In the following theorem\fy{,} we present a rate statement for Algorithm \ref{algorithm:IRIG}. 
\begin{theorem} [\bf{A rate statement for Algorithm \ref{algorithm:IRIG}}] \label{thm rate}
	Assume  $\{\bar x_N\}$ is generated by Algorithm \ref{algorithm:IRIG} to solve problem \eqref{def:SL-INC}. Let Assumption \ref{assum:properties} and \ref{assumptions for conv INC} hold and also $0<\epsilon<0.5$ and $r<1$ be arbitrary constants. Assume for $0<\epsilon<0.5$, $\{\gamma_k\}$ and $\{\lambda_k\}$ are sequences defined as
	$$
			\gamma_k=\frac{\gamma_0}{(k+1)^{0.5+0.5\epsilon}} \hbox{ and } \lambda_k=\frac{\lambda_0}{(k+1)^{0.5-\epsilon}},
	$$
	such that $\gamma_0$ and $\lambda_0$ are positive scalars and $\gamma_0\lambda_0 \mu_h \leq 2m$. Then,\\
		\noindent (a) The sequence $\{\bar x_N\}$ converges to the unique optimal solution of problem \eqref{def:SL-INC}, i.e.,  $x_h^*$.\\
		\noindent (b) $f(\bar x_N)$ converges to $f^*$ with the rate ${\cal O}\left(1/N^{0.5-\epsilon}\right)$.
\end{theorem}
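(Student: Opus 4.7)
The plan is to dispatch Part~(a) by a direct appeal to Theorem~\ref{thm conv for xbar INC}, and Part~(b) by substituting the specific forms of $\gamma_k$ and $\lambda_k$ into the finite-time inequality of Lemma~\ref{lemma averaging} and then tracking the asymptotics of each of its four sums via Lemma~\ref{lemma:ineqHarmonic}.

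For Part~(a) I set $a := 0.5+0.5\epsilon$ and $b := 0.5-\epsilon$, so the stepsizes match the form in Lemma~\ref{lemma condition for sequences INC}. The inequalities $a>b$ (i.e.\ $1.5\epsilon>0$), $a>0.5$, $a+b = 1-0.5\epsilon<1$, and the algorithmic requirement $ar\le 1$ (immediate since $a<1$ and $r<1$) are all trivial consequences of the standing assumptions $0<\epsilon<0.5$ and $r<1$. Combined with $\gamma_0\lambda_0\mu_h\le 2m$, this is precisely the setting of Theorem~\ref{thm conv for xbar INC} (whose Lemma~\ref{lemma condition for sequences INC} step simultaneously verifies Assumption~\ref{assumptions for conv INC}), giving $\bar x_N\to x_h^*$.

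For Part~(b) I would fix any $x^*\in X^*$ and apply Lemma~\ref{lemma averaging}, then estimate each of its four sums. Using Lemma~\ref{lemma:ineqHarmonic}, the normalizing sum satisfies $\sum_{k=0}^{N-1}\gamma_k^r = \Theta(N^{1-ar})$ because $ar<1$. The two ``stepsize-squared'' contributions $\sum \gamma_k^{r+1}$ and $\sum \gamma_k^{r+1}\lambda_k^j$ for $j\in\{1,2\}$ are either bounded or of order $N^{1-a(r+1)-jb}$, depending on whether $a(r+1)+jb$ exceeds $1$. The regularization sum obeys $\sum \gamma_k^r\lambda_k = \Theta(N^{1-ar-b})$, and the boundary term is $\gamma_{N-1}^{r-1}=\Theta(N^{a(1-r)})$. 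After dividing by $\sum\gamma_k^r$, the four contributions scale, at worst, as $O(N^{-a})$, $O(N^{-b})$, and $O(N^{a-1})$ respectively.

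The final step is to identify the slowest-decaying exponent. One has $-a = -0.5-0.5\epsilon < -b = -0.5+\epsilon$ and, crucially, $a-1 = -0.5+0.5\epsilon < -0.5+\epsilon = -b$ because $\epsilon>0$; hence the $\sum \gamma_k^r\lambda_k$ contribution dominates and the overall rate collapses to $\mathcal{O}(N^{-(0.5-\epsilon)})$, which is the claim. The step I expect to require the most care is the control of the boundary term $\gamma_{N-1}^{r-1}$: since $r<1$, this quantity \emph{grows} in $N$, and one must confirm that after normalization it still shrinks like $N^{a-1}$ and is strictly faster than $N^{-b}$. Fortunately this is exactly what the choice $a+b = 1-0.5\epsilon < 1$ secures, so no additional constraint on $r$ beyond $r<1$ is needed.
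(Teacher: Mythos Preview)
Your plan is essentially identical to the paper's proof: Part~(a) by checking the hypotheses of Theorem~\ref{thm conv for xbar INC} with $a=0.5+0.5\epsilon$, $b=0.5-\epsilon$, and Part~(b) by feeding the explicit sequences into Lemma~\ref{lemma averaging} and estimating each sum with Lemma~\ref{lemma:ineqHarmonic}, then observing that the $\sum\gamma_k^r\lambda_k$ term yields the slowest exponent $-b=-(0.5-\epsilon)$. One small caveat: when $a(r+1)>1$ the normalized ``stepsize-squared'' contribution is actually $O(N^{-(1-ar)})$ rather than $O(N^{-a})$, but since $ar<a<0.5+\epsilon$ one still has $1-ar>b$, so your conclusion is unaffected; the paper records this extra $1{-}ar$ exponent in its final $\min$ before discarding it via $1-ar\ge 1-a$.
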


\begin{proof} Throughout, we set $a:=0.5+0.5\epsilon$, $b:=0.5-\epsilon$.
	
	\noindent (a) From the values of $a$ and $b$, and that $r<1$ and $0<\epsilon<0.5$, we have
	\begin{align*}
	a>b>0,\ a>0.5, \ a+b=1-0.5\epsilon<1,\\ ar= 0.5(1+\epsilon)r <0.5(1.5)=0.75<1.
	\end{align*}
	This implies that all conditions of Theorem \ref{thm conv for xbar INC} are satisfied. Therefore, $\{\bar x_N\}$ converges to $x_h^*$ almost surely.
	
	\noindent (b)
Since $\{\lambda_k\}$ is a non-increasing sequence from Lemma \ref{lemma averaging} we have
\begin{align*}
&f(\bar x_N) - f^*  \leq   \left(\sum_{k=0}^{N-1} \gamma_k^r\right)^{-1} \left( m \left(C_f^2 + \lambda_0^2 C_h^2\right) \sum_{k=0}^{N-1} \gamma_k^{r+1}  \right. \cr\\& \left. 
+m^2 C_f\left( C_f + \lambda_0 C_h \right) \sum_{k=0}^{N-1} \gamma_k^{r+1} + 2 M_h   \sum_{k=0}^{N-1} \lambda_k \gamma_k^r + 2M^2\gamma_{N-1}^{r-1} \right). 
\end{align*}
We have $\gamma_k=\gamma_0/(k+1)^a$ and $\lambda_k=\lambda_0/(k+1)^b$, thus
\begin{align*}
& f(\bar x_N) - f^*  \\& \leq   \left(\sum_{k=0}^{N-1} \frac{\gamma_0^r}{(k+1)^{ar}}\right)^{-1} \left( m \left(C_f^2 + \lambda_0^2 C_h^2\right) \sum_{k=0}^{N-1} \frac{\gamma_0^{r+1}}{(k+1)^{a(r+1)}}  \right. \cr\\& \left.  
+m^2 C_f\left( C_f + \lambda_0 C_h \right) \sum_{k=0}^{N-1} \frac{\gamma_0^{r+1}}{(k+1)^{a(r+1)}} + 2 M_h \sum_{k=0}^{N-1} \frac{\lambda_0 \gamma_0^r}{(k+1)^{ar+b}}  \right. \cr\\& \left. + 2M^2\gamma_0^{r-1}N^{a(1-r)} \right). 
\end{align*}
Rearranging the terms, we have
\fy{\begin{align*}
&f(\bar x_N) - f^* \leq   \left(\sum_{k=0}^{N-1} \frac{\gamma_0^r}{(k+1)^{ar}}\right)^{-1} \\& \times\left(  2 M_h \sum_{k=0}^{N-1} \frac{\lambda_0 \gamma_0^r}{(k+1)^{ar+b}} + 2M^2\gamma_0^{r-1}N^{a(1-r)}  \right. \cr\\& \left. 
+\left( m^2 C_f\left( C_f + \lambda_0 C_h \right)+ m \left(C_f^2 + \lambda_0^2 C_h^2\right) \right) \sum_{k=0}^{N-1} \frac{\gamma_0^{r+1}}{(k+1)^{a(r+1)}}  \right). 
\end{align*}}
Let us define
\begin{align*}
&\hbox{ Term1}= \left(\sum_{k=0}^{N-1} \frac{1}{(k+1)^{ar}}\right)^{-1} N^{a(1-r)},\\
&\hbox{Term2}=\left(\sum_{k=0}^{N-1} \frac{1}{(k+1)^{ar}}\right)^{-1}\left(\sum_{k=0}^{N-1}\frac{1}{(k+1)^{ar+b}}\right).
\end{align*}
We have
\begin{align*}
&\hbox{Term1} \leq\frac{N^{a(1-r)}}{\frac{N^{1-ar}-1}{1-ar}}=\frac{(1-ar)N^{a(1-r)}}{N^{1-ar}-1}={\cal O}\left(N^{-(1-a)}\right), \\
&\hbox{Term2} \leq\frac{\frac{(N+1)^{1-ar-b}-1}{1-ar-b}+1}{\frac{N^{1-ar}-1}{1-ar}} =\frac{(1-ar)\left((N+1)^{1-ar-b}-1\right)}{(1-ar-b)\left(N^{1-ar}-1\right)} \\& + \frac{1-ar}{N^{1-ar}-1}  ={\cal O}\left(N^{-b}\right) + {\cal O}\left(N^{-(1-ar)}\right).
\end{align*}
So, we have
\begin{align*}
f(\bar x_N)-f^*\leq {\cal O}\left(N^{-\min \{1-ar,1-a,b\}}\right)= {\cal O}\left(N^{-\min \{1-a,b\}}\right),
\end{align*}
where we used $1-a\leq 1-ar$. Replacing $a$ and $b$ by their values, we have
\begin{align*}
f(\bar x_N)-f^*\leq {\cal O}\left(N^{-\min \{0.5-0.5\epsilon,0.5-\epsilon\}}\right)={\cal O}\left(N^{-(0.5-\epsilon)}\right).
\end{align*}
\end{proof}
\begin{table}[h]
	\setlength{\tabcolsep}{3pt}
	\centering
	\begin{tabular}{c| c  c  c}
	& $x_0=-10\times\mathbf{1}_n$ & $x_0=\mathbf{0}_n$ & $x_0=10\times\mathbf{1}_n$ \\ \hline\\
		\begin{turn}{90}
			$(10,1)$
		\end{turn}
		&
		\begin{minipage}{.14\textwidth}
			\includegraphics[scale=.17, angle=0]{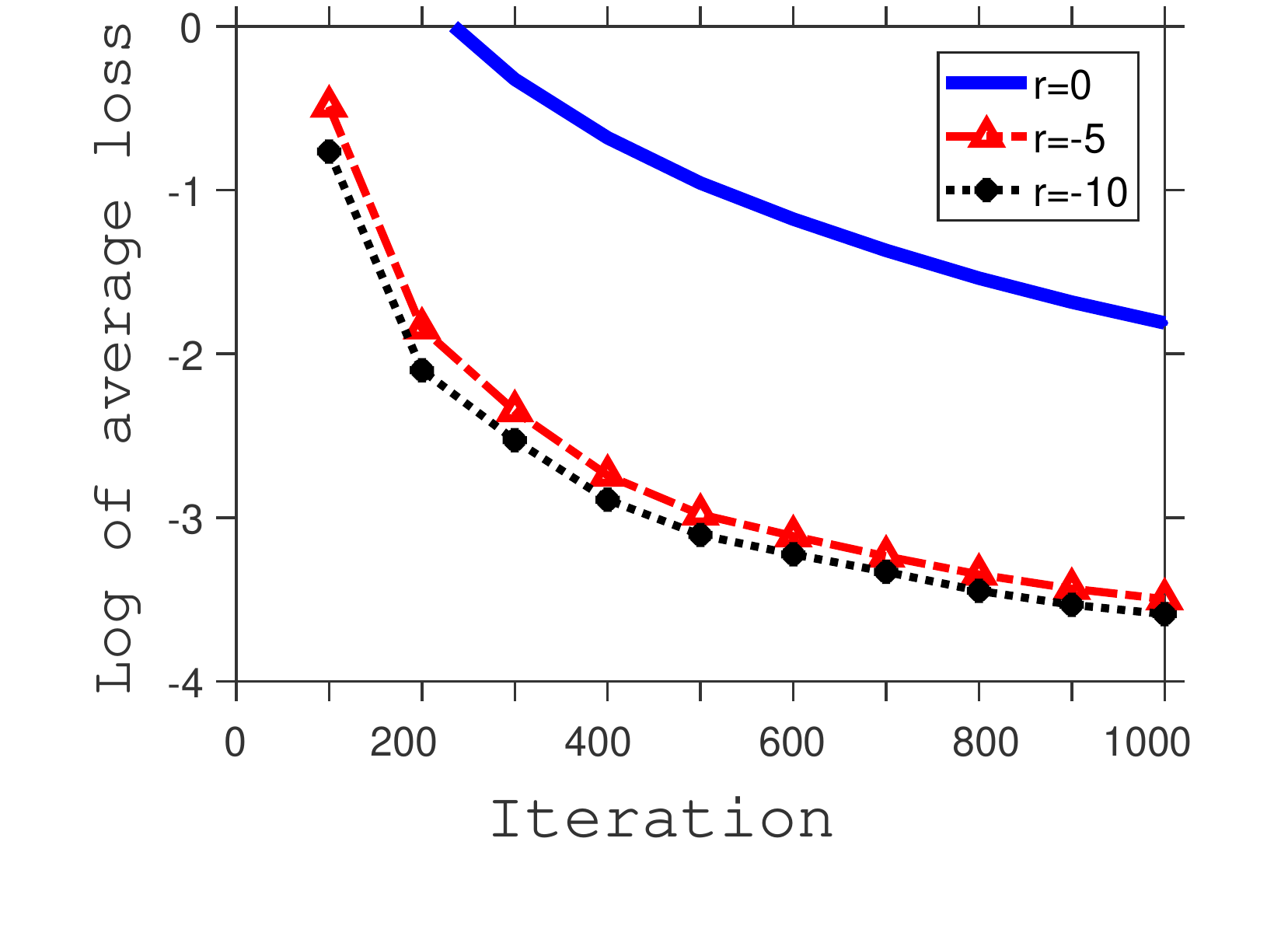}
		\end{minipage}
		&
		\begin{minipage}{.14\textwidth}
			\includegraphics[scale=.17, angle=0]{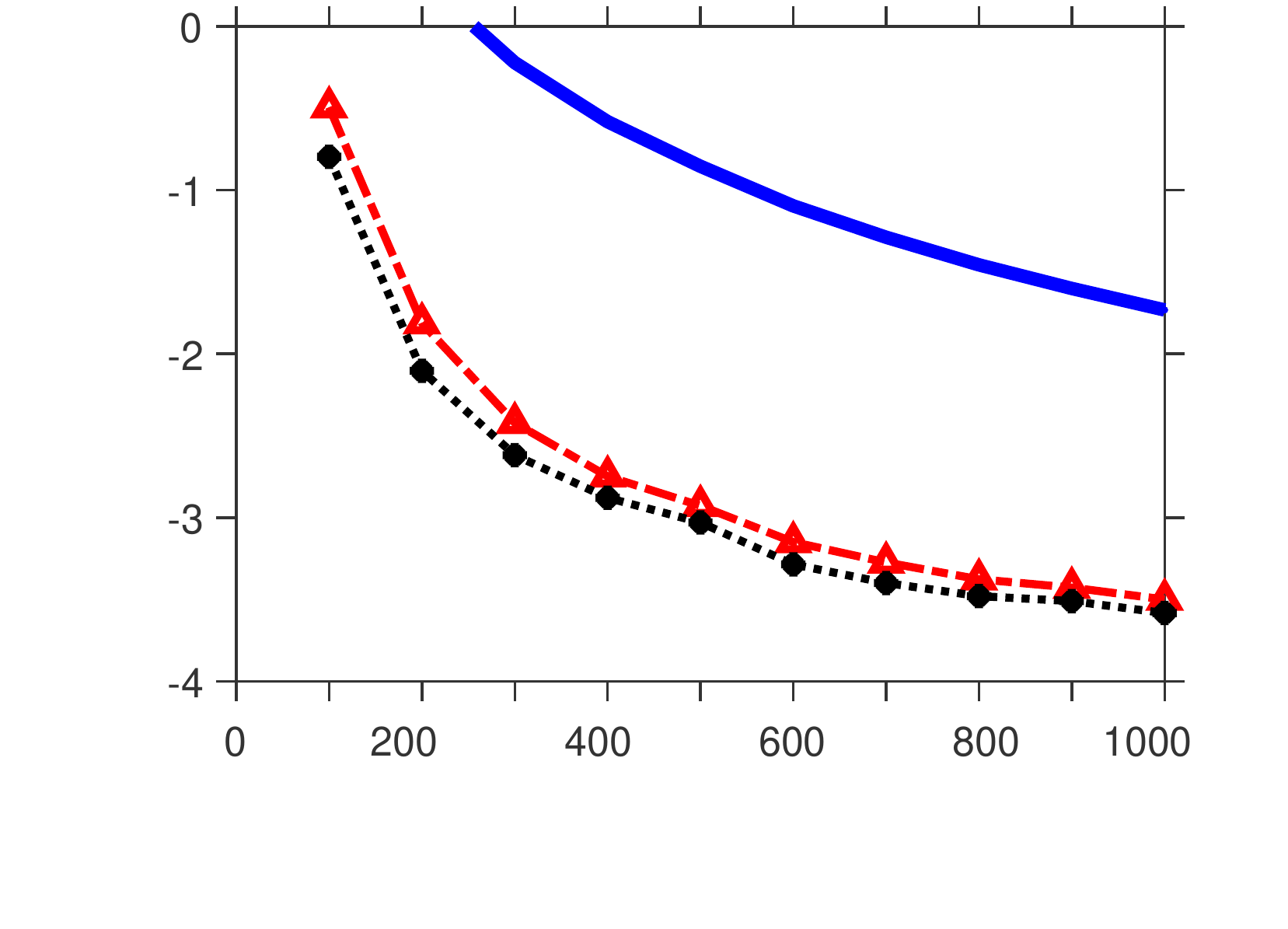}
		\end{minipage}
		&
		\begin{minipage}{.14\textwidth}
			\includegraphics[scale=.17, angle=0]{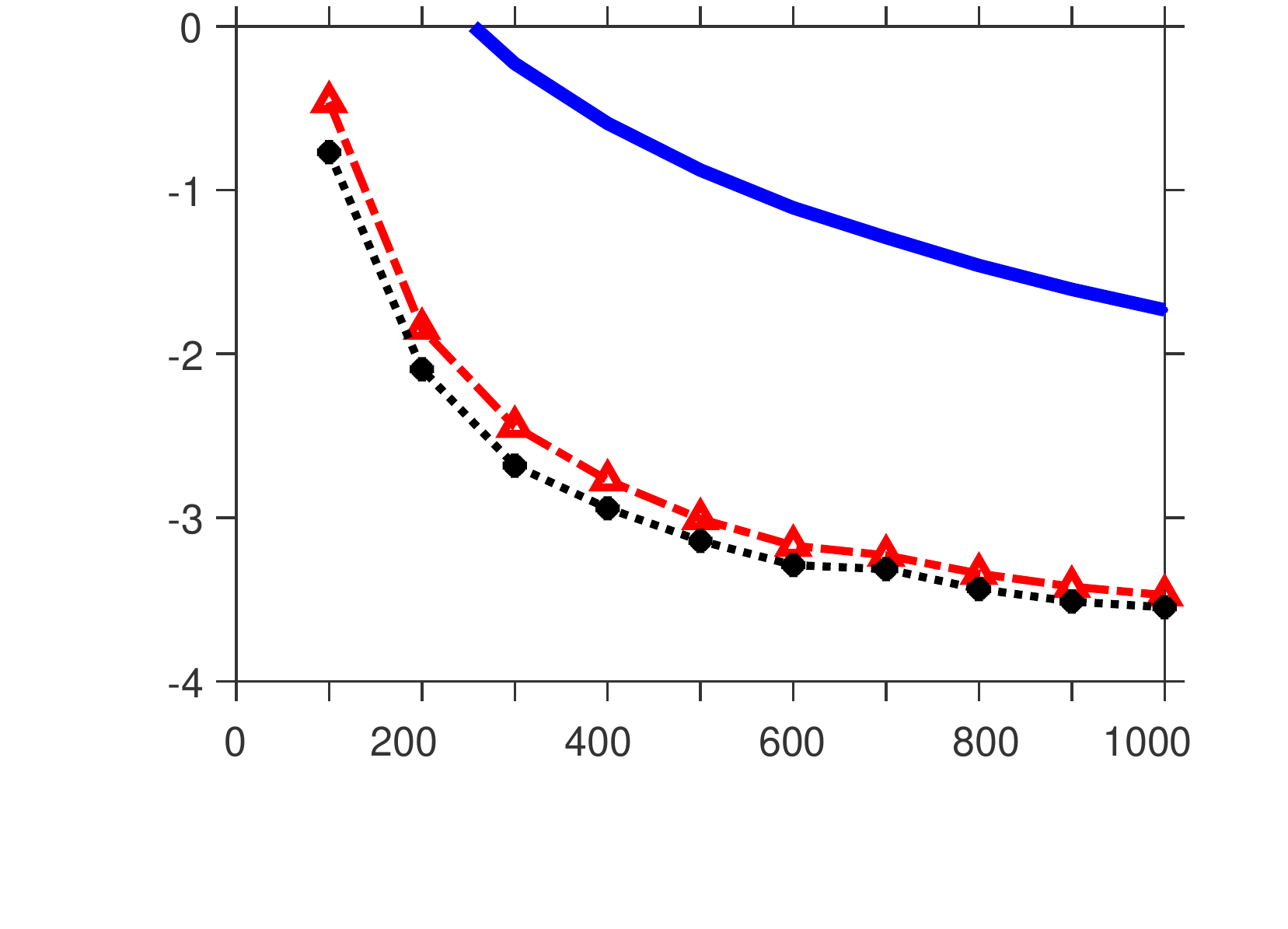}
		\end{minipage}
		\\
		\begin{turn}{90}
			$(1,10)$
		\end{turn}
		&
		\begin{minipage}{.14\textwidth}
			\includegraphics[scale=.17, angle=0]{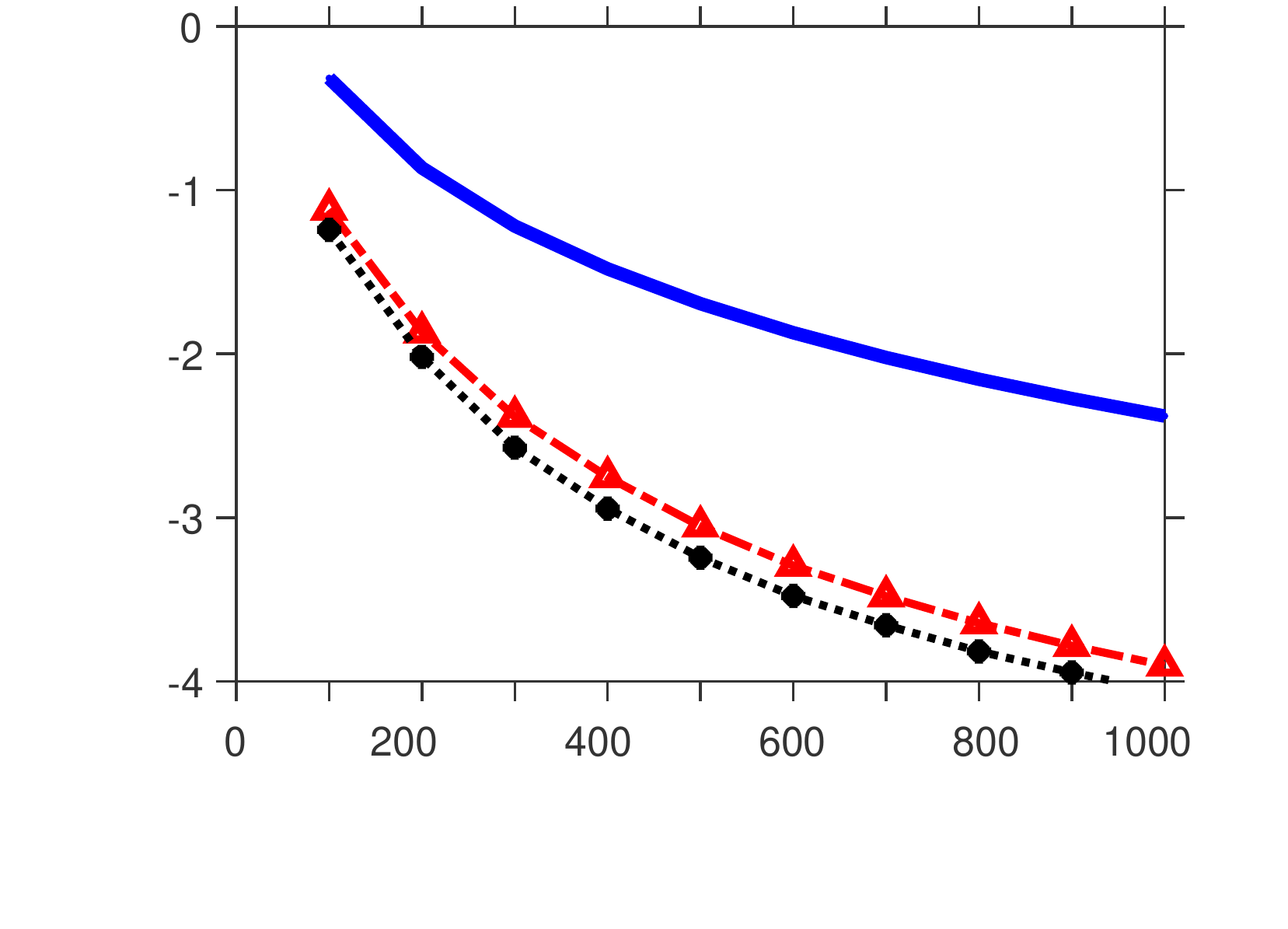}
		\end{minipage}
		&
		\begin{minipage}{.14\textwidth}
			\includegraphics[scale=.17, angle=0]{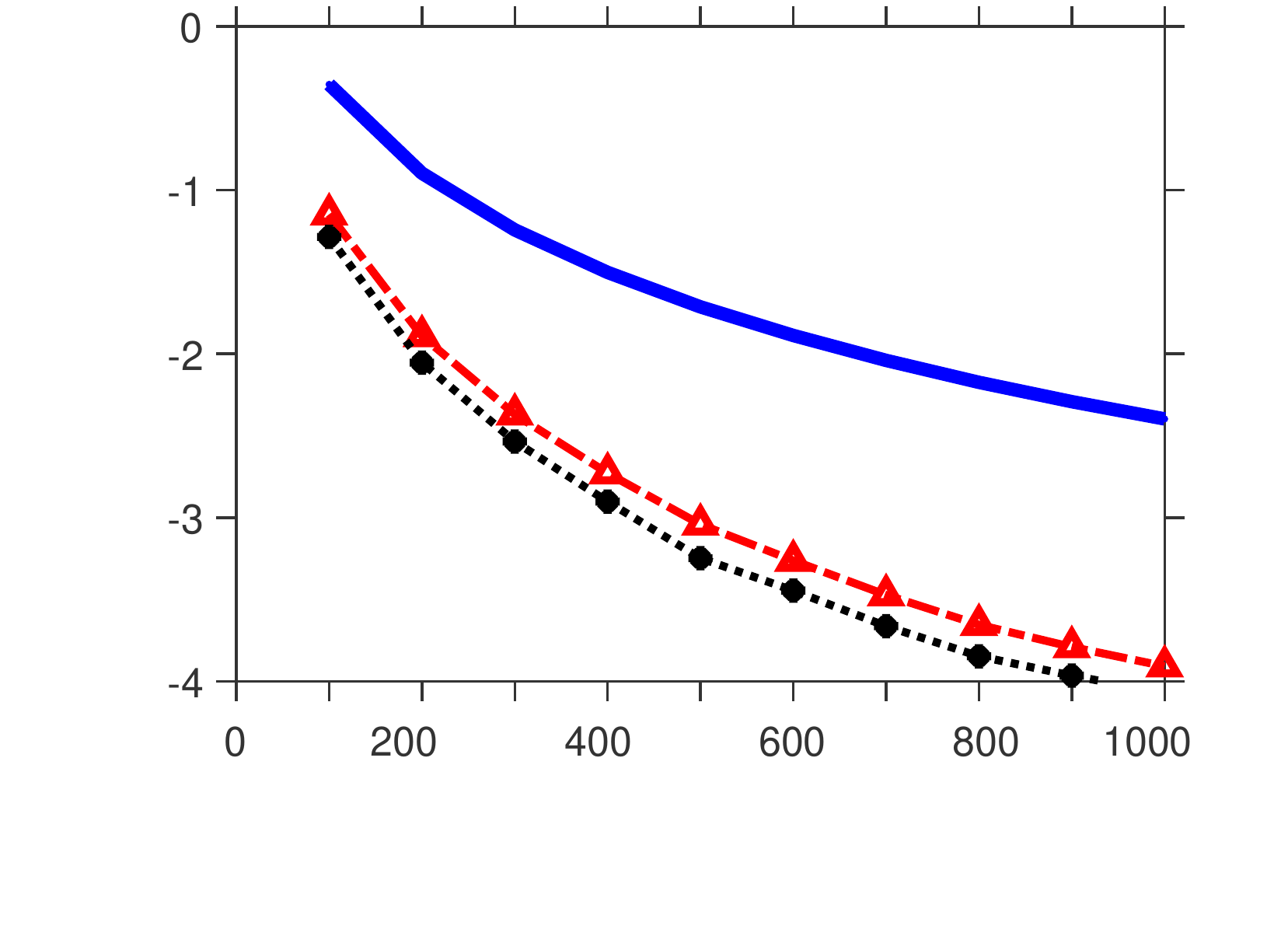}
		\end{minipage}
		&
		\begin{minipage}{.14\textwidth}
			\includegraphics[scale=.17, angle=0]{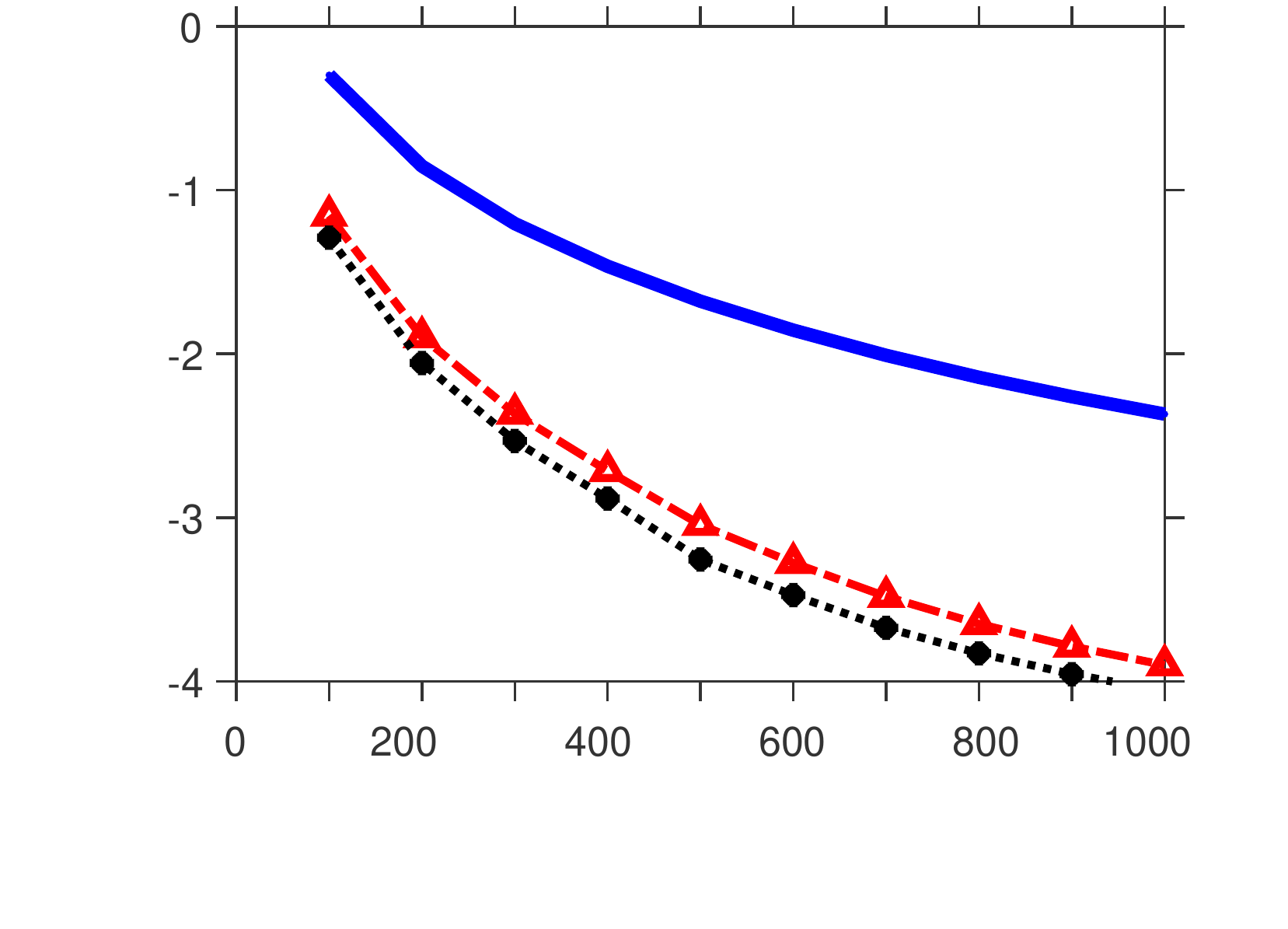}
		\end{minipage}
		\\
		\begin{turn}{90}
			$(0.1,0.1)$
		\end{turn}
		&
		\begin{minipage}{.14\textwidth}
			\includegraphics[scale=.17, angle=0]{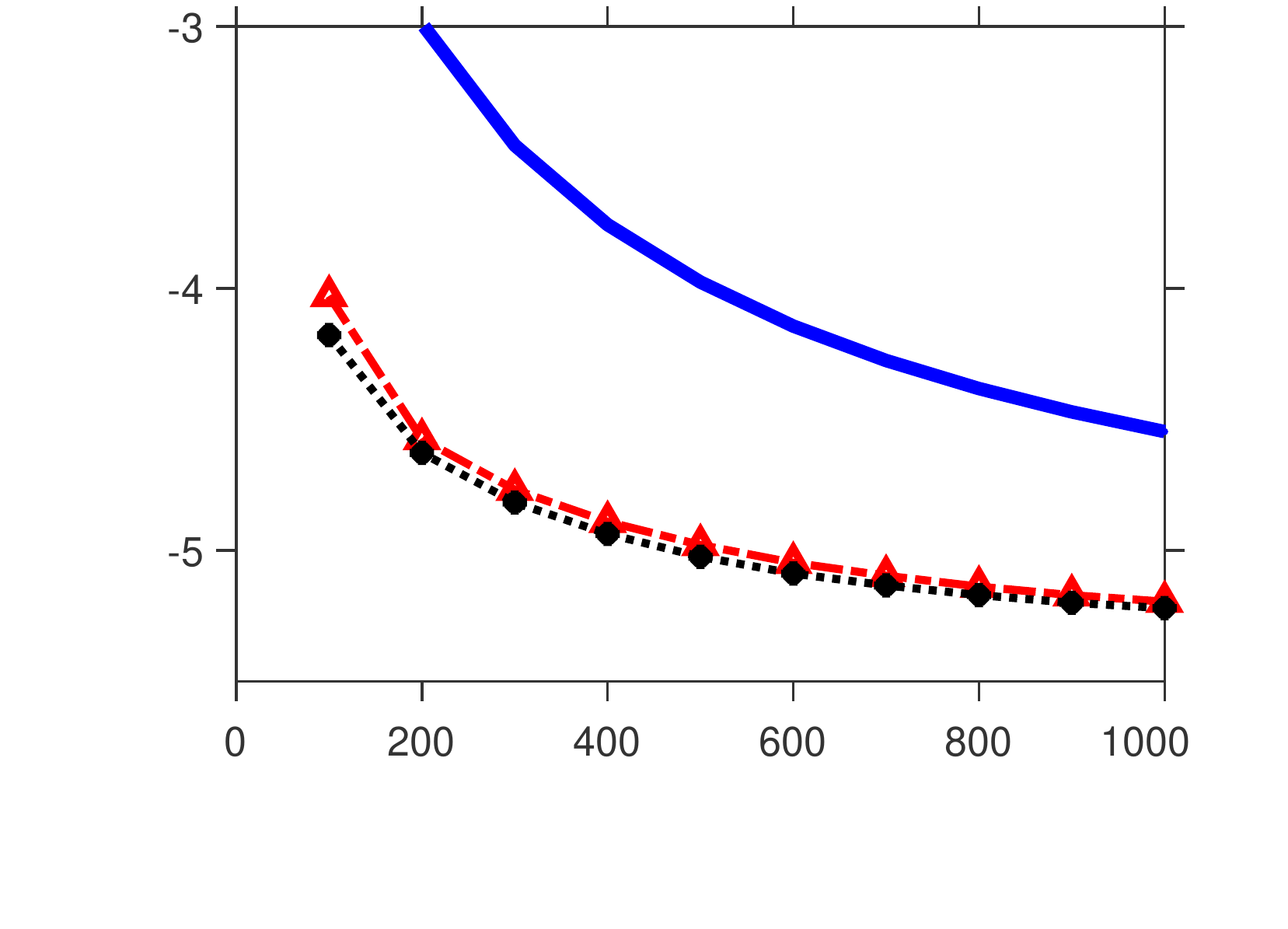}
		\end{minipage}
		&
		\begin{minipage}{.14\textwidth}
			\includegraphics[scale=.17, angle=0]{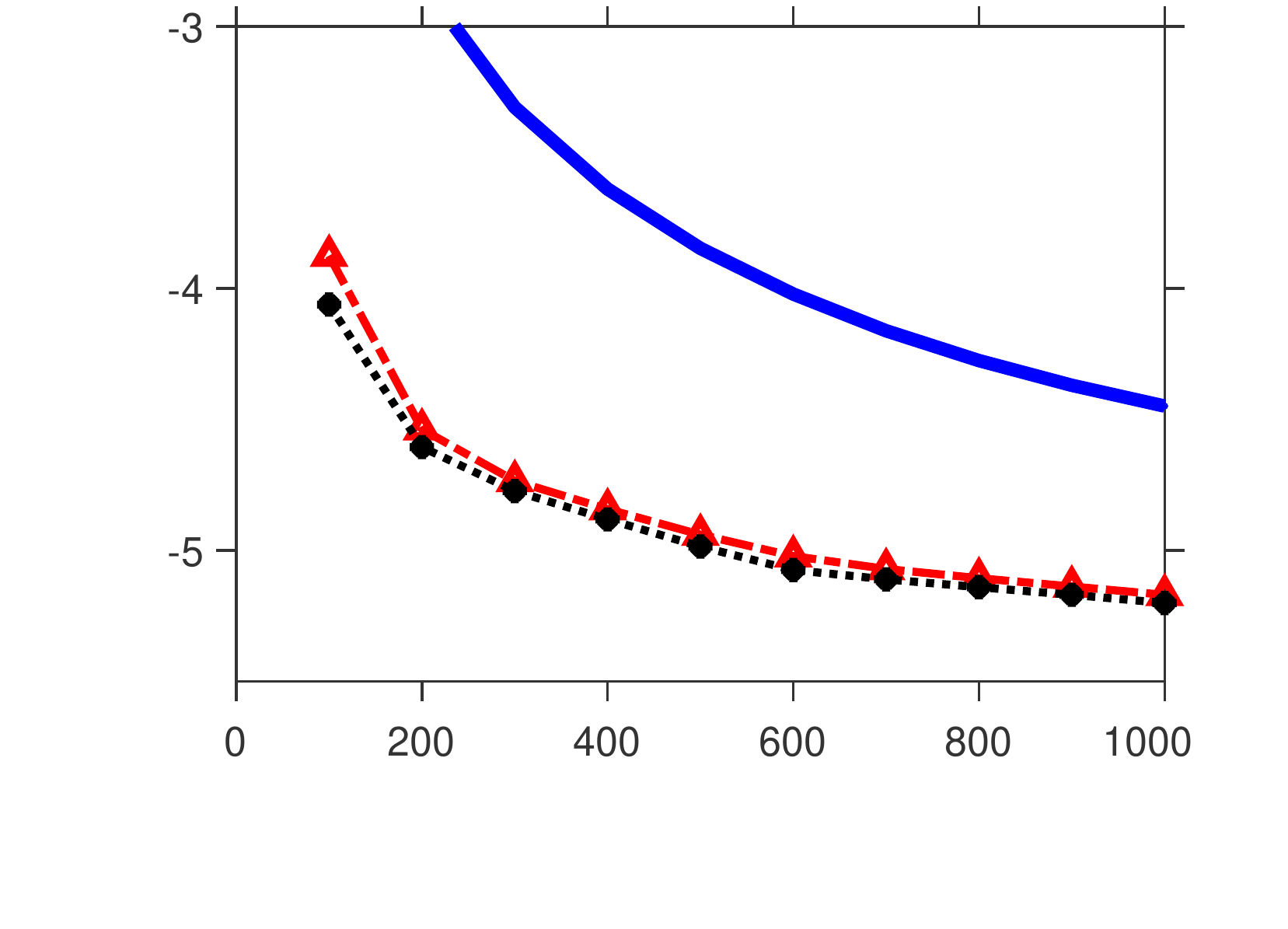}
		\end{minipage}
		&
		\begin{minipage}{.14\textwidth}
			\includegraphics[scale=.17, angle=0]{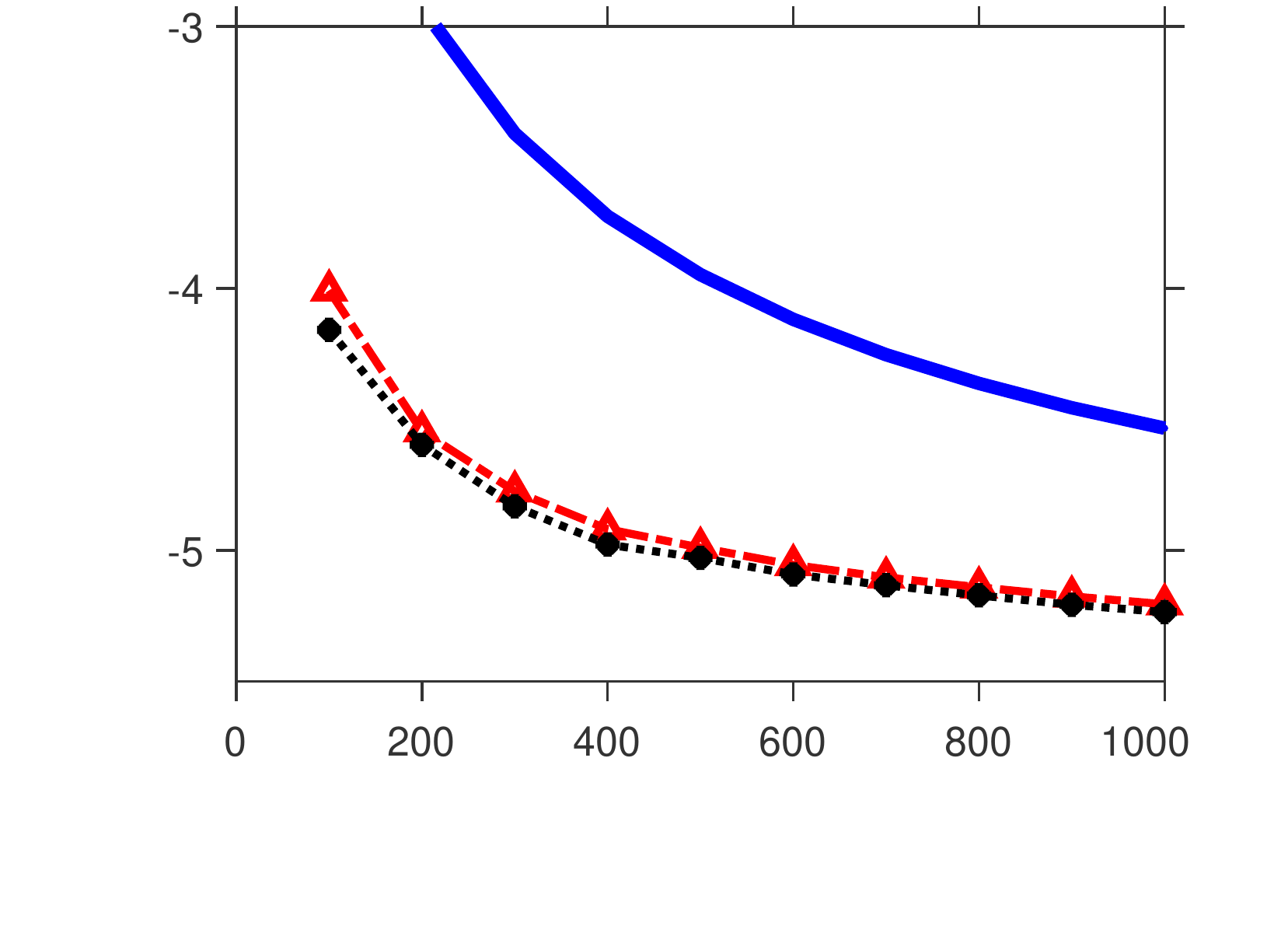}
		\end{minipage}
	\end{tabular}
	\captionof{figure}{\scriptsize{IR-IG performane for binary text classification problem with respect to different initial point $x_0$, parameters $(\gamma_0,\lambda_0)$ and $r$. The vertical axis represents the log of average loss and the horizontal axis is the number of iterations.}}
	\label{fig:fiveplots}
\end{table}
\section{Numerical results}\label{sec:num}
In this section, we apply the IR-IG method on a text classification problem. In this problem we assume to have a summation of hinge loss function, i.e., $\mathcal{L}(\langle x,a\rangle, b) \triangleq \max \{0, 1-b \langle x,a\rangle\}$ for any sample $a,b$ in the lower level of \eqref{def:SL-INC}. The set of observations $(a_i,b_i)$ are derived from the Reuters Corpus Volume I (RCV1) dataset  (see \cite{Lewis04}). This dataset has categorized Reuters articles, from 1996 to 1997, into four groups: \textit{ Corporate/Industrial, Economics, Government/Social} and \textit{Markets}. In this application, we use a subset of this dataset with $N=50,000$ articles and 138,921 tokens to perform a binary classification of articles only with respect to the \textit{Markets} class. Each vector $a_i$ represents the existence of all tokens in article $i$, and $b_i$ shows whether the article belongs to the \textit{Markets} class. To decide that a new article can be placed in the \textit{Markets} class, the problem in the lower level should be solved such that the optimal solution is a weight vector of the tokens regarding the \textit{Markets} class which minimizes the total loss. However, to make such a decision, an optimal solution with a large number of nonzero components are undesirable.  \fy{To induce sparsity,} we consider the \fy{following} bilevel problem:

\begin{align} \label{numeric upper2}
\displaystyle \mbox{minimize}& \ h(x)\triangleq  \frac{\mu_h }{2}\|x\|_2^2+ \|x\|_1\\
\mbox{subject to} & \ x \in \arg\min_{y \in \mathbb{R}^n } \sum_{i=1}^{m}\sum_{j=1}^{N/m}{ \mathcal{L}(a_{(i-1)N/m+j}^Ty, b_{(i-1)N/m+j})}, \nonumber
\end{align}
where, we let $\mu_h=0.1$ and we consider each batch of $N/m=1000$ articles to be one component function with the total number of component functions $m=50$. The function $h$ is strongly convex with parameter $\mu_h$. \fy{We} let $\gamma_k$ and $\lambda_k$ be given by the rules in Theorem \ref{thm rate}. We \fy{study the sensitivity of the method by changing} $x_0$, $\gamma_0$, $\lambda_0$, and the averaging parameter $r<1$. We finally report the logarithm of average of the loss function $\mathcal{L}$. The plots in Fig. 1 show the convergence of the IR-IG method for the problem \eqref{numeric upper2}. The results show the convergece of Algorithm \ref{algorithm:IRIG} with different initial values such as the starting point or parameters $\gamma_0, \lambda_0$ while when we pick a smaller $r$ the algorithm is faster in all the cases.

\section{Concluding remarks}\label{sec:rem}


Motivated by the applications of incremental \fy{gradient schemes} in distributed optimization, especially in machine learning and large data training, we develop an iterative regularized incremental first-order method,  called IR-IG, for solving a class of bilevel convex optimization. We prove the convergence of IR-IG and establish the corresponding rate in terms of the \fy{lower} level objective function. We finally \fy{apply IR-IG to} a binary text classification problem \fy{and} demonstrate the \fy{performance} of \fy{the proposed algorithm}.

\addtolength{\textheight}{-12cm}   






\end{document}